\documentclass[12pt]{article}

\usepackage{latexsym,amsmath,amsfonts,amsthm}

\usepackage{cite}

%-----------------------------------------------

\newcommand{\N}{\mathbb{N}}

\newcommand{\R}{\mathbb{R}}
\newcommand{\T}{\mathbb{T}}
\newcommand{\rar}{\mbox{$\rightarrow$}}

\newcommand{\U}{\mathcal{U}}

\newcommand{\Z}{\mathbb{Z}}

%-----------------------------------------------

\newtheorem{lemma}{Lemma}[section]
\newtheorem{theorem}{Theorem}[section]
\newtheorem{prop}{Proposition}[section]
\newtheorem{cor}{Corollary}[section]

\theoremstyle{remark}

\newtheorem{df}{Definition}[section]
\newtheorem{remark}{Remark}[section]

%-----------------------------------------------

\begin{document}

\title{Avoidance Control on Time Scales\footnote{To appear in
\emph{J. Optim. Theory Appl.} {\bf 145} (2010), no.~3. In Press.}}

\author{Ewa Paw{\l}uszewicz\thanks{On leave from Bia{\l}ystok
Technical University, Poland. Email: epaw@pb.edu.pl}\\
\texttt{ewa@ua.pt}
\and
Delfim F. M. Torres\thanks{Corresponding author.}\\
\texttt{delfim@ua.pt}}

\date{Department of Mathematics\\
      University of Aveiro\\
      3810-193 Aveiro, Portugal}

\maketitle

%-----------------------------------------------

\begin{abstract}
We consider dynamic systems on time scales
under the control of two agents. One of the
agents desires to keep the state of the system out of a given set
regardless of the other agent's actions. Leitmann's
avoidance conditions are proved to be valid
for dynamic systems evolving on an arbitrary time scale.
\end{abstract}

\smallskip

\noindent \textbf{Mathematics Subject Classification 2010:} 34N05, 93C05, 93C55, 93C70.

\smallskip

%-----------------------------------------

\smallskip

\noindent \textbf{Keywords:} Avoidance, Linear control systems, Time scales.

%-----------------------------------------------

\section{Introduction}

The concept of avoidance control was introduced and investigated
by Leitmann and his co-authors in a series of papers
\cite{Leit:72,Leit:81,C:Leit:Sk:2,C:Leit:Sk,Leit:80,LS,LS:83}.
For a recent survey on avoidance control we refer the reader to \cite{Stipanovic}.
The goal of this paper is to initiate
the study of avoidance control on an arbitrary time scale.

Analysis on time scales is nowadays recognized as the right tool
to unify and extend the seemingly disparate fields of discrete-time dynamical
systems and continuous-time dynamical systems \cite{AB,Bh,Bh_Adv}.
In the present paper when the time scale
is fixed to be the real line, then one gets the classical
results of Leitmann and Skowro\'nski \cite{LS};
when the time scale is chosen
to be the integers, one gets analogous results
in the discrete-time case. Moreover,
other models of time can be considered,
and an illustrative example of the results of the paper
is considered for the periodic case.

We consider delta-dynamical systems on time scales under the control
of two agents: the evader and the pursuer.
The evader desires a strategy guaranteing
that no trajectory of the delta-system, emanating
from outside the antitarget, intersects that set,
no matter what strategy the pursuer employs.
Our main objective is to obtain conditions that guarantee
that all the trajectories of a given
linear control system on time scales,
that start outside the prescribed avoidance set,
will never enter the set.

The paper is organized as follows. In Section~\ref{sec:prelim}
we briefly review the necessary calculus on time scales.
In Section~\ref{sec:mf:ts} we prove some results
necessary in order to deal with delta-differential inclusions,
which are considered in Section~\ref{sec:ddi}.
Our main result is proved in Section~\ref{sec:mr}.
The obtained result is then applied
in Section~\ref{sec:lcs:ts} to the case of
linear control systems on time scales.
We finish with Section~\ref{sec:ex}, discussing
a concrete example of application of our results.
Necessary elements of delta-measurability and nonlinear theory
on time scales are presented in Appendix.

%-----------------------------------------------

\section{Preliminaries}
\label{sec:prelim}

A {\it time scale} $\T$ is an arbitrary nonempty closed subset of
the set $\R$ of real numbers. The standard cases comprise $\T=\R$,
$\T=\Z$, and $\T=h\Z$ for $h>0$. We assume that $\T$ is a topological
space with the topology induced from $\R$.
For $t \in \T$ we define the {\it forward jump operator} $\sigma:\T
\rar \T$ by $\sigma(t):=\inf\{s \in \T:s>t\}$, the {\it backward
jump operator} $\rho:\T \rar \T$ by $\rho(t):=\sup\{s \in \T:s<t\}$,
the {\it graininess function} $\mu:\T \rar [0,\infty)$ by
$\mu(t):=\sigma(t)-t$. Using these operators we can classify
the points of the time scale: if $\sigma(t)>t$, then $t$ is called {\it
right-scattered}; if $\rho(t)<t$, then $t$ is called {\it left-scattered};
if $t<\sup\T$ and $\sigma(t)=t$, then $t$ is called {\it right-dense};
if $t>\inf\T$ and $\rho(t)=t$, then $t$ is {\it left-dense}.
Function $f:\T \rar \R$ is called \emph{rd-continuous} provided it is continuous
at right-dense points in $\T$ and its left-sided limits exist (finite) at left-dense points in $\T$.
We write $f \in C_{\textrm{rd}}$. Function $f:\T \rar \R$ is called
\emph{regulated} provided its right-sided limits exist (finite)
at all right-dense points of $\T$ and its left-sided limits exist (finite) at all left-dense points in $\T$.
Function $f$ is \emph{piecewise rd-continuous}, denoted by $f\in C_{\textrm{prd}}$,
if it is regulated and if it is rd-continuous at all,
except possibility at finitely many, right-dense points $t\in\T$.
Note that composition of a continuous function $g$ with $f\in C_{\textrm{rd}}$ or $f\in C_{\textrm{prd}}$
is respectively rd-continuous or piecewise rd-continuous.
Let $\T^\kappa$ denote the set
\[\T^\kappa:=\left\{ \begin{array}{ccc}
                 \T \setminus (\rho (\sup\T),\sup\T] & {\rm if} &
                  \sup\T<\infty \\
                 \T & {\rm if} & \sup\T=\infty \, .
                \end{array} \right. \]

\begin{df}[\cite{Bh}]
Let $f:\T \rar \R$ and $t \in \T^\kappa$. The {\it delta derivative}
of $f$ at $t$, denoted by $f^\Delta(t)$,
is the real number (provided it exists) with the property
that given any $\varepsilon$ there is a neighborhood
$U=(t-\delta,t+\delta) \cap \T$ (for some $\delta>0$) such that
$|(f(\sigma(t))-f(s))-f^\Delta(t)(\sigma(t)-s)| \leq
\varepsilon|\sigma(t)-s|$ for all $s \in U$. We say that $f$ is {\it
delta differentiable} on $\T^\kappa$ provided $f^\Delta(t)$ exists for all
$t\in \T^\kappa$.
\end{df}

A function $f$ is \emph{rd-continuously delta differentiable} (we
write $f\in C_{\textrm{rd}}^{1}$) if $f^{\Delta}$ exists for all
$t \in \T^{\kappa}$ and $f^{\Delta}\in C_{\textrm{rd}}$.
A continuous function $f$ is \emph{piecewise rd-continuously delta differentiable}
(we write $f\in C_{\textrm{prd}}^{1}$) if $f$ is continuous and $f^{\Delta}$
exists for all, except possibly at finitely many
$t \in \T^{\kappa}$, and $f^{\Delta}\in C_{\textrm{rd}}$.

\begin{remark}
 If $\T=\R$, then for any $t \in \R$ we have $\sigma(t)=t=\rho(t)$
 and the graininess function $\mu(t) \equiv 0$.
 A function $f:\R \rar \R$ is delta differentiable at $t \in \R$ if and only if
 $f^{\Delta}(t)=\lim\limits_{s \rar t}\frac{f(t)-f(s)}{t-s}=f'(t)$, \textrm{i.e.},
 if and only if $f$ is
 differentiable in the ordinary sense at $t$.
 If $\T=\Z$, then for every $t \in \Z$
 we have $\sigma(t)=t+1$, $\rho(t)=t-1$, and
 the graininess function $\mu(t)\equiv 1$.
 A function $f:\Z \rar \R$ is always delta
 differentiable at every $t\in \Z$ with
 $f^{\Delta}(t)=\frac{f(\sigma(t))-f(t)}{\mu(t)}=f(t+1)-f(t)=:\Delta f(t)$.
\end{remark}

\begin{theorem}[Chain Rule \cite{Bh}]
\label{chan_rule}
Let $f:\R\rightarrow \R$ be continuously differentiable and suppose
$g:\T\rightarrow \R$ is delta differentiable.
Then $f\circ g:\T\rightarrow \R$ is delta differentiable and
\[(f\circ g)^{\Delta}(t)=\left\{
\int_0^1f'(g(t)+h\mu(t)g^{\Delta}(t)){\rm d}h \right\}g^{\Delta}(t).\]
\end{theorem}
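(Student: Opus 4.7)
The plan is to split the argument according to whether $t\in\T^\kappa$ is right-scattered or right-dense, and to verify the stated formula separately in each case; the two regimes will then fit together into a single expression that is valid pointwise on $\T^\kappa$.

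For a right-scattered point $t$, $\mu(t)>0$ and the delta derivative is given by the difference quotient $(f\circ g)^\Delta(t)=\bigl(f(g(\sigma(t)))-f(g(t))\bigr)/\mu(t)$. Writing $g(\sigma(t))=g(t)+\mu(t)g^\Delta(t)$ and introducing the auxiliary real-variable function $\phi(h):=f\bigl(g(t)+h\mu(t)g^\Delta(t)\bigr)$ on $[0,1]$, the numerator equals $\phi(1)-\phi(0)$. Since $f\in C^1(\R)$ and the inner argument is affine in $h$, the classical fundamental theorem of calculus together with the ordinary chain rule on $\R$ gives $\phi(1)-\phi(0)=\int_0^1 f'\bigl(g(t)+h\mu(t)g^\Delta(t)\bigr)\mu(t)g^\Delta(t)\,dh$. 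Dividing by $\mu(t)$ yields exactly the claimed formula.

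For a right-dense point $t$ we have $\mu(t)=0$, so the integrand collapses to $f'(g(t))$ and it suffices to show the classical-looking identity $(f\circ g)^\Delta(t)=f'(g(t))\,g^\Delta(t)$. I would verify this from the definition: given $\varepsilon>0$, produce a time-scale neighborhood $U$ of $t$ on which $\bigl|f(g(t))-f(g(s))-f'(g(t))g^\Delta(t)(t-s)\bigr|\le\varepsilon|t-s|$ for every $s\in U$. Writing $f(g(t))-f(g(s))=\bigl(g(t)-g(s)\bigr)\int_0^1 f'\bigl(g(s)+h(g(t)-g(s))\bigr)\,dh$ and inserting $\pm f'(g(t))(g(t)-g(s))$, the error splits into (i) a piece controlled by the continuity of $f'$ at $g(t)$, combined with the continuity of $g$ at $t$ (which follows from delta differentiability), and (ii) a piece controlled by the delta-differentiability estimate $|g(t)-g(s)-g^\Delta(t)(t-s)|\le\delta|t-s|$ on $g$ itself.

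I expect the right-dense case to be the main obstacle, because the estimate couples two approximations, namely the first-order Taylor expansion of $f$ at $g(t)$ and the delta-linearization of $g$ at $t$. The neighborhood $U$ must be chosen small enough that, for $s\in U$, both $g(s)+h(g(t)-g(s))$ stays in a region where $f'$ is $\varepsilon$-close to $f'(g(t))$ uniformly in $h\in[0,1]$ and simultaneously the delta-incremental remainder for $g$ is below $\varepsilon|t-s|$. Once this bookkeeping is carried out, the two cases assemble into a single formula valid for every $t\in\T^\kappa$, completing the proof.
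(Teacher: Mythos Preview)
The paper does not prove this theorem; it is quoted from \cite{Bh} (Bohner and Peterson) and stated without argument, so there is no in-paper proof to compare against.

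That said, your argument is correct. The right-scattered case is exactly the fundamental-theorem-of-calculus computation you describe. The right-dense case goes through with the bookkeeping you anticipate: delta differentiability of $g$ at $t$ forces continuity of $g$ at $t$, which confines $g(s)+h(g(t)-g(s))$ to a neighborhood of $g(t)$ uniformly in $h\in[0,1]$, so continuity of $f'$ handles your term~(i); the delta-increment estimate for $g$ handles term~(ii), and local boundedness of $f'$ near $g(t)$ keeps the cross-term under control. For reference, the proof in \cite{Bh} (Theorem~1.90 there) treats both regimes at once rather than splitting on the type of $t$: it works directly with the defining inequality for the delta derivative at a general $t\in\T^\kappa$, using the integral mean-value identity
\[
f(g(\sigma(t)))-f(g(s))=(g(\sigma(t))-g(s))\int_0^1 f'\bigl(hg(\sigma(t))+(1-h)g(s)\bigr)\,dh
\]
together with the estimate on $g(\sigma(t))-g(s)$ coming from the delta differentiability of $g$. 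Your case split makes the two mechanisms (difference quotient versus limit) more visible; the unified textbook version is a bit shorter and shows that no dichotomy is really needed.
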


A continuous function $f:\T \rar \R$ is called \emph{pre-differentiable} with (the region of differentiation) $D$,
provided $D \subset \T^\kappa$, and $\T^\kappa \setminus D$ is countable and contains no right-scattered elements of $\T$.
For any regulated function $f$ there exists a function $F$ that is
pre-differentiable with the region of differentiation $D$ such that $F^{\Delta}(t)=f(t)$ for
all $t \in D$. Such $F$ is called a pre-antiderivative of $f$. Then the \emph{indefinite integral} of $f$ is defined by
$\int f(t)\Delta t:=F(t)+C$, where $C$ is an arbitrary constant. The {\it Cauchy integral} is defined by
\[\int\limits_r^sf(t)\Delta t=F(s)-F(r)\]
for all $r,s \in \T^\kappa$. A function $F:\T \rar \R$ is called an {\it antiderivative} of $f: \T\rar \R$ provided $F^{\Delta}(t)=f(t)$
holds for all $t \in \T^\kappa$. It can be shown that every rd-continuous function has an antiderivative \cite{Bh}.

\begin{remark}
 If $\T=\R$, then $\int\limits_a^b f(\tau) \Delta \tau=\int\limits_a^b f(\tau)d\tau$
 where the integral on the right hand-side is the usual Riemann integral. If $\T=h\Z$, $h>0$,
and $a<b$, then $\int\limits_a^b f(\tau)\Delta\tau=\sum\limits_{t=\frac{a}{h}}^{\frac{b}{h}-1}f(th)h$.
\end{remark}

We say that a function $v:\T\times\R^n \rar \R^n$ is \emph{right-increasing}
(respectively \emph{right-nondecreasing})
at a point $(t_0,x_0)\in\T\setminus\{\max\T\}\times\R^n$ provided:
\begin{enumerate}
\item [(i)] if $t_0$ is right-scattered,
then there is a neighborhood $U_{x_0}$ of point $x_0$ such that
$v(\sigma(t_0),x)>v(t_0,x_0)$
  (respectively $v(\sigma(t_0),x)\geq v(t_0,x_0)$) for any $x \in U_{x_0}$;
\item [(ii)] if $t_0$ is right-dense, then there is a neighborhood $U_{(t_0,x_0)}$
of point $(t_0,x_0)$ such that $v(t,x)>v(t_0,x_0)$ (respectively $v(t,x)\geq v(t_0,x_0)$)
for all $(t,x)\in U_{(t_0,x_0)}$ with $t > t_0$.
\end{enumerate}
Similarly, we say that function $v:\T\times\R^n \rar \R^n$ is \emph{right-decreasing}
(respectively \emph{right-nonincreasing}) if above, in condition (i),
$v(\sigma(t_0),x)<v(t_0,x_0)$ (respectively $v(\sigma(t_0),x)\leq v(t_0,x_0)$),
and in condition (ii) $v(t,x)<v(t_0,x_0)$ (respectively
$v(t,x)\leq v(t_0,x_0)$).

For each function $t \mapsto v(t,x(t)) \in C_{\textrm{rd}}^{1}$, let us define the operator
$\zeta$ by
\begin{equation*}
\zeta(v)(t,x(t)) := v(t+\mu(t),x(t)+\mu(t) x^\Delta(t)) \, ,
\end{equation*}
and operator $\mathcal{D}$ by
\begin{equation}
\label{eq:op:D}
\mathcal{D}(v)(t,x(t)) :=\left\{
             \begin{array}{ll}
               \frac{\zeta(v)(t,x(t))-v(t,x(t))}{\mu(t)} & \hbox{for $\mu(t)\neq 0$;} \\
               \frac{\partial v}{\partial t}(t,x(t))+\frac{\partial v}{\partial x}(t,x(t)) x^\Delta(t) & \hbox{for $\mu(t)=0$.}
             \end{array}
           \right.
\end{equation}
Note that if $\mu(t)\equiv 0$, then
$\mathcal{D}(v)(t,x(t))=\frac{d}{dt}v(t,x(t))$
and $\zeta(v)=v$ for any function $v$.

\begin{prop}
\label{prop.2.5}
Let $t\in\T$.
\begin{enumerate}
\item [(i)] If $\mathcal{D}(v)(t,x(t))\leq 0$, then $v$
is a right-nonincreasing function of $t$.
\item [(ii)] If $\mathcal{D}(v)(t,x(t))\geq 0$,
then $v$ is a right-nondecreasing function of $t$.
\end{enumerate}
\end{prop}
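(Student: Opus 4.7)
The plan is to recognize that along the trajectory $x(t)$, the operator $\mathcal{D}$ computes the delta derivative of the composition $V(t) := v(t, x(t))$. In the right-scattered case, $x(\sigma(t)) = x(t) + \mu(t) x^{\Delta}(t)$, so the numerator in \eqref{eq:op:D} equals $V(\sigma(t)) - V(t)$ and hence $\mathcal{D}(v)(t, x(t)) = V^{\Delta}(t)$; in the right-dense case, $\mathcal{D}(v)(t, x(t))$ is the classical total derivative $\frac{d}{dt} V(t)$. Once this identification is in hand, the proposition reduces to a one-sided monotonicity criterion controlled by the sign of $V^{\Delta}(t)$. I would prove part (i) directly and obtain part (ii) by applying part (i) to $-v$.

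For part (i) at a right-scattered $t_0$, I would multiply the assumption $\mathcal{D}(v)(t_0, x(t_0)) \leq 0$ by the positive number $\mu(t_0)$ and unfold the definition of $\zeta$ to obtain the pointwise inequality
\[
v\bigl(\sigma(t_0),\, x(t_0) + \mu(t_0) x^{\Delta}(t_0)\bigr) \leq v(t_0, x(t_0)).
\]
Continuity of $v$ in its spatial argument then transfers this inequality to a sufficiently small neighborhood of $x(t_0) + \mu(t_0) x^{\Delta}(t_0)$, which is exactly condition (i) of the right-nonincreasing definition taken at $x_0 := x(t_0)$.

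For part (i) at a right-dense $t_0$, the hypothesis unfolds to
\[
\tfrac{\partial v}{\partial t}(t_0, x(t_0)) + \tfrac{\partial v}{\partial x}(t_0, x(t_0))\, x^{\Delta}(t_0) \leq 0,
\]
that is, $V'(t_0) \leq 0$. Using the $C^{1}$-regularity of $v$, the continuity of $x$, and the rd-continuity of the derivative in a right half-neighborhood of $t_0$, a mean value argument applied to $V$ yields $V(t) \leq V(t_0)$ for $t$ slightly to the right of $t_0$, and joint continuity of $v$ in $(t, x)$ then furnishes the spatial neighborhood required in condition (ii) of the definition.

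The main obstacle I anticipate is the borderline case $\mathcal{D}(v) = 0$ at a right-dense point: a single-point vanishing derivative does not by itself guarantee one-sided monotonicity of $V$ on any neighborhood, as classical examples such as $t^{2} \sin(1/t)$ show. The argument must therefore exploit either the rd-continuity of $\mathcal{D}(v)$ on a small one-sided neighborhood (so the sign condition is effectively propagated, not merely pointwise) or a direct $\varepsilon$-estimate from the delta-derivative definition of $V^{\Delta}$, in order to rule out oscillatory behavior immediately to the right of $t_0$.
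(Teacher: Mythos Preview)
Your proposal is correct and follows the same two-case split as the paper: at a right-scattered $t_0$ multiply through by $\mu(t_0)>0$ to obtain the pointwise inequality, and at a right-dense $t_0$ identify $\mathcal{D}(v)(t,x(t))$ with the classical total derivative $\frac{d}{dt}v(t,x(t))$ and read off monotonicity. The paper dresses the right-dense case up by introducing an auxiliary pair $(x_1(\tau),x_2(\tau))$ satisfying a small system of $\Delta$-equations, but the substance is exactly your identification $\mathcal{D}(v)=\tfrac{d}{dt}V$; your route is simply more direct. Your caution about the borderline case $\mathcal{D}(v)(t_0,x(t_0))=0$ at a right-dense point is well placed: the paper's argument likewise tacitly uses the sign condition on a one-sided neighborhood (not merely at the single point) to conclude that $t\mapsto v(t,x(t))$ is nonincreasing, so your proposed fix via the rd-continuity of $\mathcal{D}(v)$ is precisely what is needed in both presentations.
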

\begin{proof}
We prove $(i)$. Proof of $(ii)$ is similar.
Let us fix $t_0\in\T$. If $t_0$ is right-scattered
and $x\in U_{x_0}$, then $\mathcal{D}(v)(t,x(t))\leq 0$ implies that
\[\frac{v(t_0+\mu(t_0),x_0+\mu(t_0)x^\Delta_{|_{t=t_0}}(t))-v(t,x_0)}{\mu(t_0)}\leq 0.\]
Since $\mu(t_0)$ is positive, then $\sigma(v)(t,x)\leq v(t_0,x_0)$.

Let $t_0$ be a right-dense point and $(t,x)\in U_{(t_0,x_0)}$. Let $\tau=t-t_0$ for every $t\in U_{(t_0,x_0)}$.
Define $x_1(\tau):=t = \tau + t_0$
and $x_2(\tau):=v(t) = v(\tau + t_0)$, and note that the solution $v(t)$
for $t\in [t_0,+\infty)_{\T}$ of the $\Delta$-differential equation
$v^\Delta(t)=v(t,x(t))$, $v(t_0)=x_0$, can be obtain as solution
$x_2(\tau)$, $\tau\geq 0$, of the following system of equations:
\begin{eqnarray*}
   x_1^\Delta(\tau)=1,\;\;\;\;\;\;\;\;\;x_1(0)=t_0\\
   x_2^\Delta(\tau)=v(x_1(\tau),x_2(\tau)),\;\;\;\;\;x_2(0)=x_0.
\end{eqnarray*}
Note that in this case $\mathcal{D}(v)(t,x(t))$ is nothing else
as $\frac{d}{dt}\left(v(x_1(t),x_2(t))\right)$. Since $\mathcal{D}(v)(t,x(t))\leq 0$,
then $\frac{d}{dt}\left(v(x_1(t),x_2(t))\right)\leq 0$
and it follows that function $t \mapsto v(x_1(t),x_2(t))$
is a nonincreasing function of $t$.
\end{proof}

%-----------------------------------------------

\section{Multifunctions on Time Scales}
\label{sec:mf:ts}

Let $\T$ be an arbitrary time scale. If $t_0, t_1\in\T$ and $t_0 \leq t_1$, then $[t_0,t_1]_{\T}$
denotes the intersection of the real closed interval $[t_0,t_1]$ with $\T$.
Similar notation is used for open, half-open, or infinite intervals.
Additionally, we will assume that interval $[t_0,t_1)_{\T}$ is nonempty.

Function $f$ is said to be \emph{lower rd-semicontinuous at point $t_0\in\T$}
if it is rd-continuous and for any $\varepsilon>0$ there is
a neighborhood $U_{t_0}$ of $t_0$ in which $f(t)>f(t_0)-\varepsilon$.
If $f(t_0)=-\infty$, then we regard $f$ as lower semicontinuous at $t_0$.

\begin{lemma}\label{bouned_semicont}
A finite lower rd-semicontinuous function $f$ defined
on a compact time scale interval  is bounded from below.
\end{lemma}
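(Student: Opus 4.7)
The plan is to argue by contradiction, mimicking the classical proof that a lower semicontinuous function on a compact set is bounded below, and then to check that the two ingredients of that argument (compactness of the interval and a lower‑semicontinuity estimate at an accumulation point) remain available on a time scale.

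First I would observe that $[t_0,t_1]_{\T}=[t_0,t_1]\cap\T$ is a closed and bounded subset of $\R$, hence compact in the subspace topology inherited from $\R$. In particular every sequence in $[t_0,t_1]_{\T}$ has a subsequence converging to some point of $[t_0,t_1]_{\T}$. This is the only topological fact about time scale intervals that I would need, and it is immediate, so this step is essentially free.

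Next I would suppose, for contradiction, that $f$ is not bounded below on $[t_0,t_1]_{\T}$. Then there exists a sequence $(\tau_n)\subset[t_0,t_1]_{\T}$ with $f(\tau_n)\to-\infty$. By the previous paragraph, after passing to a subsequence I may assume $\tau_n\to\tau^{*}\in[t_0,t_1]_{\T}$. Since $f$ is finite, $f(\tau^{*})\in\R$. Applying the definition of lower rd‑semicontinuity at $\tau^{*}$ with $\varepsilon=1$ produces a neighborhood $U_{\tau^{*}}$ of $\tau^{*}$ in $\T$ on which $f(t)>f(\tau^{*})-1$. For all sufficiently large $n$ we have $\tau_n\in U_{\tau^{*}}$, whence $f(\tau_n)>f(\tau^{*})-1$, contradicting $f(\tau_n)\to-\infty$.

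I do not expect a genuine obstacle: once the definition in the paper is parsed, the lower‑semicontinuity inequality $f(t)>f(\tau^{*})-\varepsilon$ in a neighborhood of $\tau^{*}$ is exactly what is needed, and it is supplied directly by the hypothesis without any case analysis on whether $\tau^{*}$ is right‑dense, right‑scattered, left‑dense, or isolated. The only mild point worth writing out is that the subsequential limit $\tau^{*}$ really lies in $[t_0,t_1]_{\T}$, which uses closedness of $\T$; after that the argument is essentially the same three lines as in the $\R$ case.
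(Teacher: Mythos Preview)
Your argument is correct; the contradiction via sequential compactness of $[t_0,t_1]_{\T}$ together with the neighborhood inequality $f(t)>f(\tau^{*})-\varepsilon$ is exactly the classical proof, and nothing in the time scale setting obstructs it.

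The paper, however, takes a different and shorter route. Its definition of \emph{lower rd-semicontinuous} already requires $f$ to be rd-continuous; hence $f$ is regulated, and the paper simply cites the fact (from Bohner--Peterson) that a regulated function on a compact time scale interval is bounded. So the paper leverages the rd-continuity clause built into the hypothesis and an off-the-shelf boundedness result, obtaining two-sided boundedness essentially for free. Your proof, by contrast, uses only the lower-semicontinuity inequality and is entirely self-contained; it would go through even under the weaker, more standard notion of lower semicontinuity (without rd-continuity), and it yields exactly the one-sided bound the lemma asserts. Each approach has its merits: the paper's is a one-line reduction to a known theorem, while yours isolates precisely what is needed and avoids the external citation.
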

\begin{proof}
A lower rd-semicontinuous function is a regulated function.
The result follows because every regulated function defined
on a compact interval is bounded \cite{Bh}.
\end{proof}

Let us recall the definition of lexicographical ordering
on $\R^n$. For any two distinct points $x=(x_1,\ldots,x_n)$ and $y=(y_1,\ldots,y_n)$
we write $x\prec y$ if $x_1 < y_1$ or
there exists $i \in \{2,\ldots,n\}$ such that
$x_1 = y_1$, \ldots, $x_{i-1}=y_{i-1}$ and $x_i<y_i$.
Every compact set $K \subseteq \R^n$ has one first point
$\xi=(\xi_1,\ldots,\xi_n)$ with respect to the lexicographical order.

Let $t\in \T$ and let $t\rar F(t)\subset\R^n$ be a multifunction with compact values.
A \emph{lexicographical selection $t\rar\xi(t)\in F(t)$}, where $\xi(t)$
is the first point of the compact set $F(t)$ with respect to the lexicographical order,
can be determined inductively as follows \cite{BPi}.
Let $\{e_1,\ldots,e_n\}$ be the standard basis in $\R^n$. Define
\begin{eqnarray*}
 \begin{array}{c}
  \xi_1=\min_{x\in K}\langle e_1,x \rangle,\;\;\;K_1=\{x\in K,\;\langle e_1,x\rangle=\xi_1\}\, ,\\
  \xi_2=\min_{x\in K_1}\langle e_2,x \rangle,\;\;\;K_2=\{x\in K_1,\;\langle e_2,x\rangle=\xi_2\}\, ,\\
   \vdots\\
  \xi_n=\min_{x\in K_{n-1}}\langle e_n,x \rangle,\;\;\;K_n=\{x\in K_{n-1},\;\langle e_n,x\rangle=\xi_n\} \, .\\
 \end{array}
\end{eqnarray*}
By induction, all sets $K_n\subseteq\cdots\subseteq K_1 \subseteq K$ are compact,
hence the components $\xi_1,\ldots,\xi_n$ are well defined.
The set $K_n$ contains the single point $\xi$, which precedes
all other points of $K$ in the lexicographical order.

\begin{lemma}\label{a.7.2}
Let $\T$ be a time scale and $t\in \T$. Suppose that $t\rar F(t)\subset\R^n$
is a bounded multifunction with closed graph, defined for $t$ in a closed set
$\mathcal{J}_{\T}\subseteq\T$ ($\mathcal{J}_{\T}$ denotes a time scale interval).
Then for each vector $v\in\R^n$ the function
$\varphi(t):=\min_{t\in F(t)}<v,t>$ is lower rd-semicontinuous.
\end{lemma}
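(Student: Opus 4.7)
The plan is to prove lower rd-semicontinuity of $\varphi$ at an arbitrary $t_0\in\mathcal{J}_{\T}$ by a contradiction/subsequence argument in which the two hypotheses on $F$ play complementary roles: boundedness yields compactness of selection sequences, while the closed graph pins any cluster point inside $F(t_0)$.

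First I would fix $t_0\in\mathcal{J}_{\T}$ and $\varepsilon>0$, seeking a $\T$-neighborhood $U_{t_0}$ on which $\varphi(t)>\varphi(t_0)-\varepsilon$. If $t_0$ is isolated in $\T$ (both right- and left-scattered), then $\{t_0\}$ itself serves as such a neighborhood and the inequality is vacuous, so the substantive case is when $t_0$ is right-dense or left-dense. Suppose, for contradiction, that no neighborhood with the required property exists. Then one extracts a sequence $(t_n)\subset\mathcal{J}_{\T}$ with $t_n\rar t_0$ and $\varphi(t_n)\leq\varphi(t_0)-\varepsilon$ for every $n$. Since each compact set $F(t_n)$ realises its minimum of the linear functional $\langle v,\cdot\rangle$, choose $s_n\in F(t_n)$ with $\langle v,s_n\rangle=\varphi(t_n)$. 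The boundedness of $F$ confines $(s_n)$ to a fixed bounded subset of $\R^n$, so a subsequence $(s_{n_k})$ converges to some $s^{\ast}\in\R^n$. Because $(t_{n_k},s_{n_k})\rar(t_0,s^{\ast})$ and the graph of $F$ is closed, $s^{\ast}\in F(t_0)$, whence $\varphi(t_0)\leq\langle v,s^{\ast}\rangle=\lim_{k}\langle v,s_{n_k}\rangle\leq\varphi(t_0)-\varepsilon$, the desired contradiction.

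The chief obstacle, which I expect to require the most care, is reconciling the argument above with the wording of the definition of lower rd-semicontinuity, which additionally demands rd-continuity/regularity of $\varphi$. At right-scattered points this is automatic, and at right-dense points the semicontinuity just established provides the relevant one-sided control; the remaining task is to verify that $\varphi$ admits a left-sided limit at each left-dense point. This follows from exactly the same compactness-plus-closed-graph recipe applied along increasing sequences $t_n\uparrow t_0$, together with boundedness of $\varphi$ on $\mathcal{J}_{\T}$ (a consequence of the bound on $F$, in the spirit of Lemma \ref{bouned_semicont}), which forces any two candidate subsequential limits to coincide. In all cases the argument pivots on the same two inputs: compactness from boundedness, and identification from the closed-graph hypothesis.
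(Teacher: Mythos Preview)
Your argument is correct and is essentially the paper's own proof: both extract minimizers $s_n\in F(t_n)$, use boundedness of $F$ to pass to a convergent subsequence, and invoke the closed-graph hypothesis to place the cluster point in $F(t_0)$, yielding $\varphi(t_0)\leq\liminf\varphi(t_n)$. You phrase it as a contradiction at fixed $\varepsilon$ while the paper phrases it via the $\liminf$; these are interchangeable, and your added paragraph on the rd-continuity clause in the definition addresses a point the paper's proof simply leaves unmentioned.
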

\begin{proof}
Proof is similar to the classical one \cite{BPi}. For fixed $\tau \in \mathcal{J}_{\T}$
consider a sequence $\{\tau_k\}$ of points $\tau_k \in \mathcal{J}_{\T}$ with $\tau_k \rar \tau$, such that
\[\lim_{t\rar\tau}\inf_{t\in\mathcal{J}_{\T}}\varphi(t)=\lim_{k\rar\infty}\varphi(\tau_k).\]
Choose $t_k\in F(\tau_k)\subset\R^n$ such that $\varphi(\tau_k)=p\cdot t_k$, where $p$ is
any nonzero real number. Because $F(t)$ is a bounded multifunction with closed graph,
then the obtained sequence of vectors, call them $v$, is uniformly bounded.
So, there exists a convergent subsequence $\{t_{k'}\}$ such that
$t_{k'}\rar\overline{t}$ and, because $F(t)$ has a closed graph, $\overline{t}\in F(t)$.
Hence,
\[\varphi(\tau)=\min_{t\in F(t)}\langle v,t\rangle \leq \langle v,
\overline{t}\rangle=\lim_{k\rar\infty}\varphi(\tau_k)
=\liminf_{t\rar\tau}\mathcal{J}_{\T}.\]
\end{proof}

\begin{theorem}\label{a.7.3}
Let $\T$ be a time scale with $a,b\in\T$, $a<b$. Let $t\in \T$.
Suppose that $t\rar F(t)\subset\R^n$ is a bounded multifunction
with closed graph, defined for $t\in[a,b]_{\T}$.
Let $\xi(t)\in F(t)$ be the lexicographical selection for any $t\in\T$.
Then the map $t \rar \xi(t)$ is $\Delta$-measurable.
\end{theorem}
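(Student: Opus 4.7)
My plan is to establish $\Delta$-measurability of $\xi=(\xi_1,\ldots,\xi_n)$ coordinate by coordinate by induction on $i$, invoking Lemma~\ref{a.7.2} at every stage and relying on the delta-measurability machinery summarized in the Appendix; coordinatewise $\Delta$-measurability of the $\xi_i$ then yields $\Delta$-measurability of the vector-valued selection.

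The base case $i=1$ is immediate. Applying Lemma~\ref{a.7.2} with $v=e_1$, the function $\xi_1(t)=\min_{x\in F(t)}\langle e_1,x\rangle$ is lower rd-semicontinuous on $[a,b]_{\T}$. As noted in the proof of Lemma~\ref{bouned_semicont}, a lower rd-semicontinuous function is regulated, and every regulated function on a time scale interval is $\Delta$-measurable. Hence $\xi_1$ is $\Delta$-measurable.

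For the inductive step, assume $\xi_1,\ldots,\xi_{i-1}$ are $\Delta$-measurable and represent
\[\xi_i(t)=\lim_{\epsilon\downarrow 0}\varphi_\epsilon(t),\qquad \varphi_\epsilon(t):=\min\bigl\{\langle e_i,x\rangle\,:\,x\in F(t),\ |\langle e_j,x\rangle-\xi_j(t)|\le\epsilon,\ j<i\bigr\}.\]
This identity holds pointwise on $[a,b]_{\T}$ by compactness of $F(t)$ together with continuity of each linear functional $\langle e_j,\cdot\rangle$. It therefore suffices to show that every $\varphi_\epsilon$ is $\Delta$-measurable, since $\xi_i$ will then be a pointwise monotone limit of measurable functions. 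To apply Lemma~\ref{a.7.2} directly one would need the constraint multifunction $t\mapsto\{x\in F(t):|\langle e_j,x\rangle-\xi_j(t)|\le\epsilon\}$ to have closed graph, which is not guaranteed because the $\xi_j$ are only measurable. The fix is a Lusin-type regularization: off a set of arbitrarily small $\Delta$-measure, replace each $\xi_j$ by an rd-continuous $\tilde\xi_j$; on the approximating piece the slab multifunction is bounded with closed graph, and Lemma~\ref{a.7.2} delivers $\Delta$-measurability of the corresponding minimum. Piecing these together yields $\Delta$-measurability of $\varphi_\epsilon$, hence of $\xi_i$.

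The principal obstacle is exactly this closed-graph failure of the equality-constrained multifunction $\{x\in F(t):\langle e_j,x\rangle=\xi_j(t),\,j<i\}$ when the $\xi_j$ are merely $\Delta$-measurable. The two-stage fix above --- first relax equalities to $\epsilon$-neighborhoods so that the feasible set varies lower semicontinuously in the data, and then regularize the $\xi_j$ via a Lusin-type approximation --- reduces matters to the continuous-data situation covered by Lemma~\ref{a.7.2}. The remaining ingredients (regulated $\Rightarrow$ $\Delta$-measurable, measurability preserved under pointwise monotone limits) are standard and recalled in the Appendix.
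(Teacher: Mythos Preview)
Your argument is correct and shares its skeleton with the paper's proof: induction on the coordinate index, Lemma~\ref{a.7.2} for the base case, and a Lusin-type reduction for the inductive step so that Lemma~\ref{a.7.2} can be reapplied. The difference is that you interpose an $\epsilon$-relaxation, writing $\xi_i=\lim_{\epsilon\downarrow 0}\varphi_\epsilon$ with slab constraints $|\langle e_j,x\rangle-\xi_j(t)|\le\epsilon$, and only afterwards invoke Lusin to make the slab multifunction have closed graph. The paper skips the relaxation entirely: via Remark~\ref{rem_appendix} it decomposes $[a,b]_\T$ (up to a $\Delta$-null set) into countably many compact pieces $\mathcal{J}_{k_\T}$ on which $\xi_1,\ldots,\xi_{i-1}$ are genuinely rd-continuous, and on each such piece the \emph{equality}-constrained multifunction $F_{i-1}(t)=\{x\in F(t):\langle e_j,x\rangle=\xi_j(t),\,j<i\}$ already has closed graph, so Lemma~\ref{a.7.2} applies directly to give lower rd-semicontinuity of $\xi_i$ there. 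In other words, once you have committed to a Lusin step, the $\epsilon$-slabs buy nothing: continuity of the $\xi_j$ on the compact pieces makes the equality constraint just as closed as the inequality one. Your route works, but it carries a redundant layer (and a small extra obligation to check that the approximated slab multifunction is nonempty on the piece where you apply Lemma~\ref{a.7.2}); the paper's version is shorter and avoids the monotone-limit argument altogether.
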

\begin{proof}
Let $\xi=(\xi_1,\ldots,\xi_n)$. Because $\xi_1=\min_{t\in F(t)}\langle e_1,t \rangle$,
then Lemma~\ref{a.7.2} implies that the first component
$t\rar\xi_1(t)$ of the selection $\xi$ is measurable.
Let us assume that the first $k$ components $\xi_1(\cdot),\ldots,\xi_k(\cdot)$ are all measurable.
Using Proposition~\ref{measure_E}, Proposition~\ref{extension_of_measure},
and Remark~\ref{rem_appendix} in Appendix, we conclude that there exist
countably many disjoint compact sets $\mathcal{J}_{k{_{\T}}}$ such that the
maps $\xi_1,\ldots,\xi_k$ are all rd-continuous when restricted to each $\mathcal{J}_{k{_{\T}}}$.
Let
\[t\rar F_k(t)=\{\tau\in F(t):\;\langle e_l,\tau \rangle=\xi_l(t),\;l=1,\ldots,k\} \, .\]
By construction, sets $F_k(t)$ are all nonempty and compact.
Because multifunction $t\rar F_k(t)$, after restriction to $\mathcal{J}_{k{_{\T}}}$,
has a closed graph, it follows from Lemma~\ref{a.7.2} that function
$\xi_{k+1}(t)=\min_{\tau \in F_k(t)}\langle e_l,\tau \rangle$
is lower rd-semicontinuous. Sets $\mathcal{J}_{k{_{\T}}}$ cover $[a,b]_{\T}$
almost everywhere, so  $\xi_{k+1}(t)$ is $\Delta$-measurable.

By the induction principle with respect to $k$,
all components of $\xi(\cdot)$ are $\Delta$-measurable.
\end{proof}

%-----------------------------------------------

\section{Delta-Differential Inclusions}
\label{sec:ddi}

Let us consider a control system
\begin{equation}\label{sys1}
 x^\Delta(t)=f(t,x(t),u(t)),\;\;\;u\in\mathcal{U},
\end{equation}
where $\mathcal{U}=\{u(\cdot) \ | \ u(\cdot) \text{ is }\Delta\text{-measurable\;piecewise\;rd-continuous\;with }
u(t)\in U\subset\R^m\;
     \text{for\;all}\;t\in\T\}$
is the set of admissible controls. Suppose that the set
$U\subset\R^m$ of control values
is compact and that function $f:\T\times\Omega \times U \rar \R^n$, $\Omega\subseteq\R^n$,
is rd-continuous with respect to the first variable and continuously differentiable with respect to the second variable $x\in\R^n$.

We say that a rd-continuous function $f:[t_0,t_1]_{\T}\rar\R^n$ is \emph{absolutely rd-continuous}
if for every $\varepsilon>0$ there exists $\delta>0$ such that the following holds
(\textrm{cf.} \cite{CV05}):
if $[s_i,t_i]_{\T}$ is any finite collection of disjoint time scale intervals such that
$\sum_{i=1}^n|t_i-s_i|\leq \delta$, then $\sum_{i=1}^n|f(t_i)-f(s_i)|\leq\varepsilon$.
An absolutely rd-continuous function $x(\cdot)$ defined on some time scale interval
$[t_0,t_1]_{\T}$ is a solution of (\ref{sys1}) if its graph $\{(t,x(t)):t\in[t_0,t_1]_{\T}\}$
is entirely contained in $\Omega$, and if there exists a measurable control $u$,
taking values inside $U$, such that $x^\Delta(t)=f(t,x(t),u(t))$ for almost every $t\in[t_0,t_1]_{\T}$.
Let us recall that by \emph{trajectory} of the system (\ref{sys1})
from $x_0$ corresponding to the control $u\in \mathcal{U}$ we mean the  function
\begin{equation}\label{trajectory}
 x=\psi(t_0,\cdot,x_0,u):[t_0^u,t_1^u]_{\T} \rar \R^n
\end{equation}
such that $x$ is the unique solution of the initial value problem $x^{\Delta}(t)=f_u(t,x(t))$,
$x(t_0)=x_0$, provided it is defined on the interval $[t_0,t_1]_{\T}\subseteq\mathcal{I}_{max}$
($\mathcal{I}_{max}$ described in Theorem \ref{Cauchy}) for all $t \in [t_0,t_1]_{\T}$ and $x(t)\in\Omega$.

In connection with (\ref{sys1}) let us take the multifunction
\begin{equation*}
 F(t,x)=\{f(t,x,\omega):\;\omega\in U\} \, ,
\end{equation*}
where $U\subset\R^m$ is the set of control values,
and consider the $\Delta$-differential inclusion
\begin{equation}\label{inclusion}
x^\Delta(t)\in F(t,x(t)).
\end{equation}
We extend the classical Filippov's theorem \cite{Filippov}
to time scales as follows.

\begin{theorem}\label{Filippov}
An absolutely rd-continuous function $x:[t_0,t_1]_{\T} \rar \R^n$
is a trajectory of (\ref{sys1}) if and only if it satisfies
(\ref{inclusion}) almost everywhere.
\end{theorem}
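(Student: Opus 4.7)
The plan is to prove the two implications of Theorem~\ref{Filippov} separately; the forward direction is essentially bookkeeping, and the real work is in the converse, which hinges on a measurable selection from the set of admissible controls.

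For the ``only if'' direction, suppose $x$ is a trajectory of (\ref{sys1}), so there exists an admissible $u \in \mathcal{U}$ with $x^\Delta(t) = f(t,x(t),u(t))$ for almost every $t \in [t_0,t_1]_\T$. Since $u(t) \in U$, the very definition of $F$ immediately yields $x^\Delta(t) \in F(t,x(t))$ almost everywhere.

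For the converse, suppose $x:[t_0,t_1]_\T \rar \R^n$ is absolutely rd-continuous and satisfies (\ref{inclusion}) on a set $E \subseteq [t_0,t_1]_\T$ of full $\Delta$-measure on which $x^\Delta$ exists. I would introduce the set-valued map
\[
G(t) := \{\omega \in U : f(t,x(t),\omega) = x^\Delta(t)\}, \qquad t \in E,
\]
which is nonempty for each $t \in E$ by hypothesis. Because $U$ is compact and $f$ is continuous in $\omega$, each $G(t)$ is a compact subset of $U$, so the multifunction $t \mapsto G(t)$ is bounded. I would next verify that $G$ has closed graph on a full-measure subset of $[t_0,t_1]_\T$: if $(\tau_k,\omega_k) \rar (\tau,\omega)$ with $\omega_k \in G(\tau_k)$, then the rd-continuity of $f$ in $t$, its continuity in $(x,\omega)$, and continuity of $x$ yield $f(\tau,x(\tau),\omega) = x^\Delta(\tau)$, so $\omega \in G(\tau)$. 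Theorem~\ref{a.7.3} then provides a $\Delta$-measurable lexicographical selection $u:E \rar U$ with $u(t) \in G(t)$, and by construction $x^\Delta(t) = f(t,x(t),u(t))$ almost everywhere on $[t_0,t_1]_\T$.

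The step I expect to be most delicate is reconciling the $\Delta$-measurable selection produced by Theorem~\ref{a.7.3} with the piecewise rd-continuity required of controls in the admissible class $\mathcal{U}$. The right-scattered portion of $\T$ causes no difficulty, since at each such point the value of $u$ is constrained pointwise and can be prescribed independently; the genuine issue sits at right-dense points. Here I would lean on the appendix material (Proposition~\ref{measure_E}, Proposition~\ref{extension_of_measure}, and Remark~\ref{rem_appendix}) together with a Lusin-type decomposition of $[t_0,t_1]_\T$ into countably many compact subsets on which $u$ is rd-continuous, which is enough to realize $x$ as a trajectory in the sense of (\ref{trajectory}).
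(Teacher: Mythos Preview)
Your approach mirrors the paper's in its overall architecture---define the compact-valued multifunction of admissible $\omega$'s and extract a lexicographical selection via Theorem~\ref{a.7.3}---but there is a gap in your closed-graph verification. You argue that if $(\tau_k,\omega_k)\to(\tau,\omega)$ with $\omega_k\in G(\tau_k)$, then continuity of $f$ and of $x$ force $\omega\in G(\tau)$. The relation $\omega_k\in G(\tau_k)$ reads $f(\tau_k,x(\tau_k),\omega_k)=x^\Delta(\tau_k)$; passing to the limit on the left is fine, but on the right you need $x^\Delta(\tau_k)\to x^\Delta(\tau)$, and for an absolutely rd-continuous $x$ the delta-derivative is only $\Delta$-measurable, not rd-continuous. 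As written, the sequence argument does not go through.

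The paper repairs exactly this point by invoking the Lusin-type decomposition \emph{before} the closed-graph step: it extends $G$ to all of $[t_0,t_1]_\T$ by a default value $\{\bar\omega\}$, then uses Remark~\ref{rem_appendix} to cover $[t_0,t_1]_\T$ almost everywhere by disjoint compact sets $\mathcal{J}_{k_\T}$ on which $x^\Delta$ is rd-continuous, and only then checks that the restricted multifunction has closed graph and applies Theorem~\ref{a.7.3} on each $\mathcal{J}_{k_\T}$. You do cite precisely this appendix material, but you deploy it at the end, for the separate purpose of upgrading the selection to the admissible class $\mathcal{U}$. Move that decomposition forward so that the closed-graph argument is carried out on each $\mathcal{J}_{k_\T}$ (where $x^\Delta$ is rd-continuous and your sequence reasoning becomes valid), and your proof aligns with the paper's. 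Incidentally, the paper's own proof stops at $\Delta$-measurability of $u$ and does not address the piecewise rd-continuity you worry about in your final paragraph.
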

\begin{proof}
The proof mimics the one given in \cite{BPi} for
the continuous-time case. It is obvious that every solution of (\ref{sys1})
is a solution of (\ref{inclusion}). We prove the reverse implication.
Let $x(\cdot)$ be a solution of (\ref{inclusion}).
For a fixed element $\overline{\omega}\in U$, let
\[W(t)=\left\{
         \begin{array}{ll}
           \{\omega\in U:\;f(t,x(t),\omega)=x^\Delta(t)\} & \hbox{if}\; x^\Delta(t) \in F(t,x(t))\\
           \{\overline{\omega}\} & \hbox{otherwise.}
         \end{array}
       \right.\]
Note that multifunction $W(t)$ is compact. Let us define a control $u$
in such a way that $u(t)$ is the first element of the set $W(t)$ with respect
to the lexicographical order. Note that such control exists since the multifunction
$W(t)$ is compact. Because $W(t)=\overline{\omega}$ holds only on a  set of
$\Delta$-measure zero, \textrm{i.e.}, when $x(\cdot)$ doesn't exist or
$x^\Delta(t)\ne F(t,x(t))$, then $x^\Delta(t)=f(t,x(t),u(t))$
for almost all $t\in[t_0,t_1]_{\T}$. Let us choose a sequence of
disjoint compact intervals $\mathcal{J}_{k{_{\T}}}\subseteq[a,b]_{\T}$,
$k\in\N$, such that $\mu_{\Delta}\left\{[a,b]\setminus\bigcup_{k=1}^{\infty}\mathcal{J}_k\right\}=0$
and the $\Delta$-derivative $x^\Delta$ is well defined and rd-continuous when restricted
to $\mathcal{J}_{k{_{\T}}}$. Hence, the multifunction $W{\mid_{\mathcal{J}_{k{_{\T}}}}}$
has closed graph. Theorem~\ref{a.7.3} implies that for any $t\in\mathcal{J}_{k{_{\T}}}$
the lexicographical selection $t\rar u(t) \in W(t)$ is $\Delta$-measurable.
Since the sets $\mathcal{J}_{k{_{\T}}}$ cover $[a,b]_{\T}$ almost everywhere,
the selection $u:[a,b]_{\T}\rar U$ is $\Delta$-measurable.
\end{proof}

%-----------------------------------------------

\section{Leitmann's Avoidance Strategies on Time Scales}
\label{sec:mr}

We are now in conditions to extend Leitmann's avoidance strategies
to any time $t$ from an arbitrary time scale $\T$.

Let $p^i :\T\times \R^n \rar U_i$, $i=1,2$, where $U_i$
is a nonempty subset of the space $\R^{d_i}$, be strategies belonging to given
classes of possibility set valued functions $\U_i$
with control values $u^i$ ranging in given sets $U_i$. We allow state- and
time-dependent constrains, \textrm{i.e.}, $U_i=U_i(t,x)$.
Then $u^i\in p^i(t,x) \subseteq U_i \subseteq \R^{d_i}$.

Let us consider a function $f : \T\times \R^n \times \R^{d_1}
\times \R^{d_2} \rar \R^n$ and a set-valued function
\begin{equation*}
 F(t,x):=\{z:z=f(t,x,u^1,u^2)\;\text{where}\;u^i\in p^i(t,x)\} \, .
\end{equation*}

By a \emph{dynamical system} on the time scale $\T$
we will mean the $\Delta$-differential relation
 \begin{equation}\label{dyn_system}
  x^{\Delta}(t) \in F(t,x(t)),\;\;\;x(t_0)=x_0 \, ,
 \end{equation}
$t_0 \in \T$.
From Filippov's theorem on time scales (Theorem~\ref{Filippov})
it follows that, for the given initial condition $x(t_0)=x_0$,
the solution of the $\Delta$-differential inclusion
(\ref{dyn_system}) is an absolute rd-continuous function
\begin{equation}
\label{eq:sol:cs}
 x : [t_0,t_1]_{\T} \rar \R^n,
\end{equation}
satisfying (\ref{dyn_system}) almost everywhere for $t\in[t_0,t_1]_{\T}$.
For $x$ given by \eqref{eq:sol:cs} one has
\begin{equation}
\label{eq:dep:ctr}
\begin{split}
\zeta(v)(t,x(t)) &= v(t+\mu(t),x(t)+\mu(t) x^\Delta(t)) \\
&= v(t+\mu(t),x(t)+\mu(t) f(t,x(t),u^1(t),u^2(t)))
\end{split}
\end{equation}
for some $u^1$ and $u^2$. We will write \eqref{eq:dep:ctr}
as $\zeta(v)(t,x(t),u^1(t),u^2(t))$. Similarly, \eqref{eq:op:D}
will be written as
\begin{multline}
\label{eq:after:FT}
\mathcal{D}(v)(t,x(t),u^1(t),u^2(t)) \\
=\left\{
             \begin{array}{ll}
               \frac{\zeta(v)(t,x(t),u^1(t),u^2(t))-v(t,x(t))}{\mu(t)} & \hbox{for $\mu(t)\neq 0$;} \\
               \frac{\partial v}{\partial t}(t,x(t))+\frac{\partial v}{\partial x}(t,x(t))
               f(t,x(t),u^1(t),u^2(t)) & \hbox{for $\mu(t)=0$.}
             \end{array}
           \right.
\end{multline}

Sometimes it may be convenient to restrict $x$ by $x_{|_\Lambda}$
where $\Lambda$ is an open set or the closure of an open set in $\R^n$.
Then $(t_0,x_0)\in\T\times\Lambda$.

Let $\mathcal{T} \subseteq \Lambda$ denote a set into
which no solution of (\ref{dyn_system}) must enter for some
$p^1(\cdot)\in\U_1$ no matter what $p^2(\cdot)\in\U_2$.
The set $\mathcal{T}$ is called an \emph{antitarget set}. Let us consider two sets:
\begin{enumerate}
  \item [1.] a closed subset $\mathcal{A}\subseteq\Lambda$
  such that $\mathcal{T}\subset\mathcal{A}$;
  \item [2.] the closure $\Lambda_\varepsilon$ of an open subset
  of $\Lambda$ such that $\mathcal{A}\subset\Lambda_\varepsilon$.
\end{enumerate}
The set $\mathcal{A}$ will be called the \emph{avoidance set}
while the set $\Lambda_{\mathcal{A}}:=\Lambda_\varepsilon\setminus\mathcal{A}$
will be called the \emph{safety zone}. Note that the avoidance set can be any set
containing the antitarget set $\mathcal{T}$.

\bigskip

PROBLEM: Determine an \emph{avoidance strategy} $p^1(\cdot)\in\U_1$ such that,
given $(t_0,x_0)\in\T\times\Lambda_{\mathcal{A}}$, no solution of (\ref{dyn_system})
intersects $\mathcal{A}$ no matter what $p^2(\cdot)\in\U_2$.

\bigskip

Denote $\Omega_{\mathcal{A}}=\T^\kappa\times\Lambda_{\mathcal{A}}$.
Then an \emph{attainable set of motions}
$K(t,t_0,x_0)$ from $(t_0,x_0)$ at time $t\in[t_0;\infty)_{\T}$ for the given
$p^1(\cdot)\in\U_1$ is defined in the following way:
\[K(t,t_0,x_0):=\{x(t): \text{given}\; (t_0,x_0)\in\Omega_{\mathcal{A}},\
\text{given}\; p^1(\cdot)\in\U_1\;\text{for\;all}\;p^2(\cdot)\in\U_2\}.\]
The \emph{funnel of motions} $\U_2$ from $(t_0,x_0)$
is defined by
\[K([t_0;\infty)_{\T},t_0,x_0):=\bigcup_{t\in[t_0;\infty)_{\T}}K(t,t_0,x_0),\]
and
\[K(\Omega_{\mathcal{A}},\T):=\bigcup_{(t_0,x_0)\in\Omega_{\mathcal{A}}}K([t_0;\infty)_{\T},t_0,x_0).\]
For the given dynamical system (\ref{dyn_system}) and $\U_i$, $i=1,2$,
a set $\mathcal{A}$ is called \emph{avoidable} if and only if there are
$p^1(\cdot)\in\U_1$ and $\Omega_{\mathcal{A}}\neq\partial\Lambda_\varepsilon
\cap\partial\mathcal{A}\cap{\rm int}\Lambda$ such that
\begin{equation}\label{av}
 K(\Omega_{\mathcal{A}},\T)\cap\mathcal{A}=\partial\Lambda_\varepsilon
 \cap\partial\mathcal{A}\cap{\rm int}\Lambda.
\end{equation}
The condition (\ref{av}) implies that
\[K[\T,\T\times(\Lambda\setminus\mathcal{A})]\cap\mathcal{A}=\partial\Lambda_\varepsilon\cap\partial\mathcal{A}\cap{\rm int}\Lambda\]
and this means global avoidance.

\begin{theorem}\label{th.3.1}
A given set $\mathcal{A}$ is avoidable if there exist
a set $\Omega_{\mathcal{A}}\neq\emptyset$,
a strategy $p^1(\cdot)\in\U_1$, and
a rd-continuous function $V:S\rar\R$, where
$S$ is an open subset of $\bar{\Omega}_{\mathcal{A}}$,
such that for all $(t,x(t))\in\Omega_{\mathcal{A}}$:
 \begin{enumerate}
  \item [(i)] $V(t,x(t))>V(t_1,x^1)$ for all $x^1\in\partial\mathcal{A}$ and $t_1\geq t$ with $t_1,t\in\T^\kappa$;
  \item [(ii)] for all $u^1(t)\in p^1(t,x(t))$, where
  $\mathcal{D}(V)(t,x(t),u^1(t),u^2(t)) \geq 0$ for all $u^2(t)\in U_2$,
 $\tilde{p}^1(\cdot)=p^1(\cdot)_{\mid_{\Omega_{\mathcal{A}}}}$.
\end{enumerate}
\end{theorem}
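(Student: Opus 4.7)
The plan is a proof by contradiction built on Proposition~\ref{prop.2.5}: hypothesis~(ii) forces $t\mapsto V(t,x(t))$ to be right-nondecreasing along every trajectory generated by the evader's strategy $\tilde{p}^{1}$, while hypothesis~(i) strictly separates the value of $V$ at the initial point from its values on $\partial\mathcal{A}$ at all later times. These two facts cannot coexist with a boundary crossing.

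Concretely, I would fix $(t_0,x_0)\in\Omega_{\mathcal{A}}$ and an arbitrary pursuer strategy $p^{2}(\cdot)\in\U_2$. Filippov's theorem on time scales (Theorem~\ref{Filippov}) produces $\Delta$-measurable piecewise rd-continuous selectors $u^{1}(t)\in\tilde{p}^{1}(t,x(t))$ and $u^{2}(t)\in p^{2}(t,x(t))$ together with an absolutely rd-continuous solution $x:[t_0,t_1]_{\T}\rar\R^n$ of $x^{\Delta}(t)=f(t,x(t),u^{1}(t),u^{2}(t))$ with $x(t_0)=x_0$. Hypothesis~(ii) then reads $\mathcal{D}(V)(t,x(t),u^{1}(t),u^{2}(t))\geq 0$ along this trajectory, so Proposition~\ref{prop.2.5}(ii) yields that $V(\cdot,x(\cdot))$ is right-nondecreasing at each $t\in[t_0,t_1]_{\T}$. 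A pasting argument---invoking $V(\sigma(t),x(\sigma(t)))\geq V(t,x(t))$ at right-scattered points and the real-line monotonicity portion of Proposition~\ref{prop.2.5} on the compact subintervals $\mathcal{J}_{k{_{\T}}}$ on which $x^{\Delta}$ is rd-continuous (as in the proof of Theorem~\ref{Filippov})---promotes this local property to
\[V(s,x(s))\geq V(t_0,x_0)\quad\text{for all } s\in[t_0,t_1]_{\T}\,.\]
Suppose now, toward a contradiction, that some such trajectory meets $\mathcal{A}$, and set $t^{\ast}:=\inf\{t\in[t_0,t_1]_{\T}:x(t)\in\mathcal{A}\}$. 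Since $x_0\in\Lambda_{\mathcal{A}}=\Lambda_\varepsilon\setminus\mathcal{A}$, $\mathcal{A}$ is closed, and $x(\cdot)$ is continuous, we have $t^{\ast}>t_0$ and $x(t^{\ast})\in\partial\mathcal{A}$. Applying hypothesis~(i) at $(t,x(t))=(t_0,x_0)$ with $t_1=t^{\ast}$ and $x^{1}=x(t^{\ast})$ delivers $V(t_0,x_0)>V(t^{\ast},x(t^{\ast}))$, in direct conflict with the monotonicity inequality just established. Hence no admissible trajectory emanating from $\Omega_{\mathcal{A}}$ reaches $\mathcal{A}$, and the avoidance identity~(\ref{av}) follows from the chain of inclusions $\mathcal{T}\subset\mathcal{A}\subset\Lambda_\varepsilon$, which confines any possible contact of $K(\Omega_{\mathcal{A}},\T)$ with $\mathcal{A}$ to $\partial\Lambda_\varepsilon\cap\partial\mathcal{A}\cap\mathrm{int}\,\Lambda$.

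The hard part of the plan is the pasting step. Proposition~\ref{prop.2.5} is purely local, and on a general time scale one must simultaneously control countably many right-scattered jumps and the continuous-time behaviour on the rd-continuity intervals $\mathcal{J}_{k{_{\T}}}$ of $x^{\Delta}$, while verifying that $[t_0,t_1]_{\T}\setminus\bigcup_{k}\mathcal{J}_{k{_{\T}}}$ has $\Delta$-measure zero and cannot spoil global monotonicity. Once that assembly is carried out, the contradiction with hypothesis~(i) is immediate and the proof closes.
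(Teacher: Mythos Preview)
Your contradiction argument is the paper's argument: use hypothesis~(ii) together with Proposition~\ref{prop.2.5}(ii) to make $t\mapsto V(t,x(t))$ right-nondecreasing along any trajectory generated by $\tilde{p}^{1}$, then collide with the strict separation in hypothesis~(i) at the first contact with $\partial\mathcal{A}$. The two write-ups differ only in what each chooses to detail. You isolate the local-to-global pasting of monotonicity as the delicate step and sketch how to assemble it over the rd-continuity intervals $\mathcal{J}_{k_{\T}}$; the paper simply asserts that $V$ is right-nondecreasing along the motion and moves on. Conversely, the paper spends most of its proof on a point you omit: it traps the trajectory in the compact sublevel set $Z=\{(t,x):V(t,x)\geq\varepsilon\}$ and then invokes Proposition~\ref{a.8.11} from the Appendix to conclude that the maximal solution is defined on all of $[t_0,+\infty)_{\T}$, so that the funnel $K([t_0,\infty)_{\T},t_0,x_0)$ appearing in the avoidability condition~(\ref{av}) is actually populated for all forward time. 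Your argument, confined to a fixed finite interval $[t_0,t_1]_{\T}$, shows that no solution enters $\mathcal{A}$ while it exists but says nothing about global forward existence; if you want to match the paper's conclusion you should graft on that level-set/compactness step.
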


\begin{proof}
Let $t_0\in\T\setminus\{\max\T\}$. Assume that for some $(t_0,x_0)\in\Omega_{\mathcal{A}}$ there is $t_2\in\T$, $t_2>t_0$,
such that $K(t_0,t_2,x_0)\cap\mathcal{A}\neq \emptyset$. Then, by (i), there is $t_1\in(t_0,t_2]_{\T}$ and
$x^1\in K(t_0,t_2,x_0)\cap\partial\mathcal{A}$ such that $V(t_0,x_0)>V(t_1,x^1)$.
Note that along the trajectories of the system (\ref{dyn_system}) we have
$\mathcal{D}(V)(t,x(t),u(t))$ given in accordance with
\eqref{eq:after:FT}, with $u(t) = (u^1(t),u^2(t))$.
Thus, it is enough to show that for $(t,x)\in\Omega_{\mathcal{A}}$
such that $x\neq x_0$ and $t\neq t_0$, $t\in\T^\kappa$, there is
a $\tau\in\T$ and an admissible control $u$ such that for the trajectory
$\psi(t_0,t,x_0,u)$ of the system $\Lambda$ we have $V(t,\psi(t_0,t,x_0,u))\geq V(t,x)$
for all $t\in(t_0,\tau]_{\T}$ and $V(\tau,\psi(t_0,\tau,x_0,u))> V(\tau,x)$.

Let $(t,x)\in\Omega_{\mathcal{A}}$ be such that $x\neq x_0$ and $t\neq t_0$ with $t\in\T^\kappa$. Let us take two admissible
controls $u^1$ and $u^2$ for the system (\ref{dyn_system}). Then $\psi(t_0,\cdot,x_0,u)$ given by (\ref{trajectory})
is a trajectory corresponding to the control $u=(u^1,u^2)$.
Let us consider the maximal solution of the initial value problem $x^{\Delta}(t)=f_u(t,x(t))$, $x(t_0)=x_0$.
This is defined on some interval $[t_0,\tau)_{\T}$ with $\tau\leq+\infty$. If $\tau=+\infty$, then condition (i)
implies that $V(t,\psi(t_0,t,x_0,u))$ is right-nondecreasing as long as $(t,\psi(t_0,t,x_0,u))$ remains
in the set $\Omega_{\mathcal{A}}$.
For this it is sufficient that it remains
in the set $Z:=\{(t,x):V(t,x)\geq\varepsilon\}$ for some choice of $\varepsilon$.
Assume that there is some $\tilde{t}>0$ in the interval $\mathcal{I}_{\T}$ such that
$V(\tilde{t},\psi(t_0,\tilde{t},x_0,u)) \le \varepsilon$. Because of continuity of $V$,
there is such a first $\tilde{t}$. Thus, we may assume
that $V(t,x(t)) > \varepsilon$ for all $t\in[t_0,\tilde{t})_{\T}$.
So  $t \mapsto V(t,x(t))$ is right-nondecreasing on
$[t_0,\tilde{t}]_{\T}$, which implies that
$V(\tilde{t},\psi(t_0,\tilde{t},x_0,u))\ge V(t,x)>\varepsilon$. We arrived to a contradiction,
and we conclude that $\psi$ remains in the compact set $Z$ for all $t\in\mathcal{I}_{\T}$.
By Proposition~\ref{a.8.11} in Appendix,
$\mathcal{I}_{\T}=[t_0,+\infty)_{\T}$ (as desired). Thus, the trajectory is defined
for all $t\in\T$ and $V$ is right-nondecreasing.
\end{proof}

%-----------------------------------------------

\section{Linear Control Systems on Time Scales}
\label{sec:lcs:ts}

Let $\T$ be an unbounded time scale.
Let us consider the case of a dynamical system (\ref{dyn_system})
in which its solution $x^\Delta(t)\in f(t,x(t),p^1(t,x(t)),p^2(t,x(t)))$ is linear,
\textrm{i.e.}, described by a linear equation of the form
\begin{equation}
\label{eq:lcs:ts}
 x^{\Delta}(t)=Ax(t)+Bu^1(t)+Cu^2(t)
\end{equation}
where $x\in\Lambda\subseteq\R^n$, $u^i(t)\in U_i\subseteq\R^{d_i}$
for $i=1,2$, and $A,B,C$ are constant matrices. Equation (\ref{eq:lcs:ts})
has a unique forward solution \cite{B:Paw}.

Suppose that matrix $A$ is stable, \textrm{i.e.}, satisfies a Lyapunov equation.
The theory of Lyapunov stability on time scales for linear control systems
is studied in detail in \cite{D2,D1}. Let $Q$ be a symmetric constant matrix
such that $Q\in C_{\textrm{rd}}(\T;\R^{n\times n})$. A quadratic Lyapunov function is given by
\begin{equation}
\label{Lyapunov}
 x^T(t)Qx(t),\;\;\;t\in[t_0;\infty)_{\T},
\end{equation}
with delta derivative
\begin{equation*}
\begin{split}
[x^T(t)Qx(t)]^\Delta &= [Ax(t)]^TQx(t)+x(\sigma(t))QAx(t)\\
&= x^T(t)[A^TQ+QA+\mu(t)A^TQA]x(t).
\end{split}
\end{equation*}
The matrix dynamic equation that is obtained by delta differentiating
(\ref{Lyapunov}) with respect to $t$ is given by
\begin{equation}\label{lyap_eq}
 A^TQ+QA+\mu(t)A^TQA=M,\;\;\;M=M^T.
\end{equation}

\begin{theorem}[\cite{D1}]
If the $n\times n$ matrix $A$ has all eigenvalues
in the corresponding Hilger circle for every
$t\in[t_0,\infty)_{\T}$, then for each $t\in\T$ there exists
some time scale $\mathbb{S}$ such that integration over
$I := [0;\infty)_{\mathbb{S}}$ yields a unique solution
to the Lyapunov matrix equation (\ref{lyap_eq}):
\begin{equation}\label{Q(t)}
 Q(t) =\int_I \mathrm{e}_{A^T}^T(s,0)M \mathrm{e}_A(s,0)\Delta s.
\end{equation}
Moreover, if M is positive definite, then $Q(t)$
is positive definite for all $t\in[t_0,\infty)_{\T}$.
\end{theorem}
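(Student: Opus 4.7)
The plan is to adapt the classical continuous-time Lyapunov equation argument to the delta calculus by proceeding in three substantive steps: convergence of the improper delta integral, verification of the Lyapunov equation via differentiation of the quadratic form along solutions, and the qualitative conclusions on uniqueness and positive definiteness. For the first step, the Hilger circle hypothesis on the spectrum of $A$ is precisely the spectral condition under which the matrix exponential $\mathrm{e}_A(\cdot,0)$ is exponentially decaying on an appropriate time scale. I would use this to select the auxiliary time scale $\mathbb{S}$ whose graininess forces the spectrum of $A$ to lie strictly inside the Hilger stability region uniformly along $\mathbb{S}$; this guarantees that both $\mathrm{e}_A(s,0)$ and $\mathrm{e}_{A^{T}}^{T}(s,0)$ decay geometrically in $s$, so the delta integral defining $Q(t)$ in (\ref{Q(t)}) converges absolutely.

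For the second step, I would verify that the matrix $Q(t)$ given by (\ref{Q(t)}) solves (\ref{lyap_eq}) by differentiating the Lyapunov form $V(s):=x^{T}(s)\,Q\,x(s)$ along a solution of $x^{\Delta}=Ax$. The product rule on time scales together with $x^{\sigma}=(I+\mu A)x$ yields the identity
\[
V^{\Delta}(s)=x^{T}(s)\bigl[A^{T}Q+QA+\mu(s)\,A^{T}QA\bigr]x(s),
\]
already exhibited in the paragraph preceding (\ref{lyap_eq}). Writing $x(s)=\mathrm{e}_A(s,0)\,x_{0}$, integrating $V^{\Delta}$ over $I=[0,\infty)_{\mathbb{S}}$, and using the decay from the first step to kill the boundary term at infinity, the resulting scalar identity must hold for every $x_{0}\in\R^{n}$; stripping off $x_{0}$ yields the matrix equation (\ref{lyap_eq}) for the $Q(t)$ of (\ref{Q(t)}).

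For uniqueness I would subtract two solutions of (\ref{lyap_eq}) and apply the same delta-derivative identity to the resulting homogeneous matrix equation; integrating over $I$ and invoking the decay of $\mathrm{e}_A$ forces the difference to vanish. For positive definiteness, the matrix exponential $\mathrm{e}_A(s,0)$ is nonsingular at every $s\in I$, so when $M=M^{T}>0$ the integrand in (\ref{Q(t)}) is pointwise positive definite, and the delta integral of a pointwise positive definite rd-continuous matrix-valued function over a time scale of positive $\Delta$-measure is itself positive definite.

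The main obstacle I anticipate is the first step. Because the Hilger circle is a graininess-dependent object, the spectral condition on $A$ does not translate into uniform exponential decay of $\mathrm{e}_A(\cdot,0)$ on $\T$ directly; extracting an auxiliary time scale $\mathbb{S}$ (depending on $t\in\T$, as the statement permits) on which the decay is strong enough to guarantee absolute convergence of the improper delta integral in (\ref{Q(t)}) is the delicate technical point. Once that convergence is in hand, the product-rule computation and the positive-definiteness argument are essentially routine.
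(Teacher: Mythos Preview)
The paper does not supply a proof of this theorem at all: it is quoted as a result of DaCunha \cite{D1} and invoked without argument. There is therefore no in-paper proof to compare your proposal against.

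That said, your outline is the natural adaptation of the classical Lyapunov argument and is consistent with how such results are obtained in the time-scales literature. The one place where your sketch could be sharpened is the construction of $\mathbb{S}$: rather than an abstract ``auxiliary time scale whose graininess forces the spectrum inside the Hilger region,'' the standard choice for each fixed $t\in\T$ is a time scale with \emph{constant} graininess equal to $\mu(t)$ (i.e., $\mathbb{S}=\mu(t)\Z$ when $\mu(t)>0$ and $\mathbb{S}=\R$ when $\mu(t)=0$). With that choice the Hilger circle along $\mathbb{S}$ is identically the Hilger circle at $t$, so the spectral hypothesis gives uniform exponential decay of $\mathrm{e}_A(\cdot,0)$ on $\mathbb{S}$, and the graininess appearing in your product-rule identity for $V^{\Delta}$ is exactly $\mu(t)$, which is what you need to recover (\ref{lyap_eq}). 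Once this is made explicit, the remaining steps you describe (integration to kill the boundary term, uniqueness by subtraction, positive definiteness from nonsingularity of $\mathrm{e}_A$) are routine.
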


Let us choose a matrix $V$ in such a way that $V(t,x)=x^T(t)Qx(t)$.
Then, if the avoidance set $\mathcal{A}=\{x:x^TPx\leq{\rm const}\}$, \textrm{i.e.}, if
it is a ball in $\R^n$, then the condition (i) of
Theorem~\ref{th.3.1} is satisfied. Moreover, let us take a constant matrix $D$ in such a way
that $C=BD$ and define
\begin{equation*}
 U_i:=\{u_i: ||u_i||\leq\alpha_i,\;i=1,2\}
\end{equation*}
with $\alpha_i$ positive constants chosen in such a way that $\alpha_1\geq||D||\alpha_2$,
where $||\cdot||$ denotes the Euclidian norm. Then the condition (ii)
of Theorem~\ref{th.3.1} is met. In this case the avoidance strategy is given by
\begin{equation*}
 \tilde{p}^1(t,x)=\frac{B^TQx}{||B^TQx||}\alpha_1
\end{equation*}
for any $(t,x)$ that does not belong to the set $\T \times \{x:B^TQx=0\}$.
If $(t,x)\in\T \times \{x:B^TQx=0\}$, then $\tilde{u}^1$ may take any
admissible value, \textrm{i.e.}, $\tilde{p}^1(t,x)=U_1$.

\begin{remark}
Let us note that, because of (\ref{Q(t)}), the avoidance strategy depends on the time scale $\T$.
For $\T=\R$ the equation (\ref{lyap_eq}) is nothing else as the classical Lyapunov equation
for the time-invariant system. Similarly for $\T=\Z$.
\end{remark}

If the matrix $-A$ is not stable, but pair $(-A,-B)$ is stabilizable
and graininess function is bounded, then there exist a constant matrix
$K$ such that $-A-KB$ is stable (see \cite{BPW})
and avoidance strategy is of the form
\begin{equation}\label{admissible_strategy}
 \tilde{p}^1(t,x)=Kx+\frac{B^TQx}{||B^TQx||}||D||\alpha_2.
\end{equation}
For the strategy (\ref{admissible_strategy}) to be admissible,
$U_1=U_1(t,x)$ must be such that $\tilde{p}^1(t,x)\subseteq U_1$
for all $(t,x)\in\Omega_{\mathcal{A}}$.

% -------------------------------------------------

\section{Illustrative Example}
\label{sec:ex}

Let us consider a linear control system
\begin{equation*}
 x^\Delta(t)=\left[
               \begin{array}{cc}
                 0 & 1 \\
                 0 & 0 \\
               \end{array}
             \right]x(t)+\left[
                           \begin{array}{c}
                             0 \\
                             1 \\
                           \end{array}
                         \right]u^1(t)+\left[
                           \begin{array}{c}
                             0 \\
                             1 \\
                           \end{array}
                         \right]u^2(t)
\end{equation*}
with
\[U_2=\{u^2:\;|u^2|\leq 1\}.\]
Note that matrix $-A$ is not stable, but pair $(-A,-B)$ is stabilizable.
Then $K=\left[\begin{array}{cc}
 -1 & 1 \\
\end{array}
\right]$. Let us choose as matrix $M$ the matrix
$\left[\begin{array}{cc}
        1 & 0 \\
        0 & 1 \\
\end{array} \right]$. Then, matrix $Q$,
that solves equation (\ref{lyap_eq}), is of form
\begin{equation}\label{Q}
 Q=\left[
    \begin{array}{cc}
     {\frac{2\,\left(2\,\mu+{\mu}^{2}+3\right)}{6\,\mu+4+3\,{\mu}^{2}+{\mu}^{3}}} & -{\frac {{\mu}^{2}+\mu+2}{6\,\mu+4+3\,{\mu}^{2}+{\mu}^{3}}} \\
     -{\frac {{\mu}^{2}+\mu+2}{6\,\mu+4+3\,{\mu}^{2}+{\mu}^{3}}} & {\frac {{\mu}^{2}+4+\mu}{6\,\mu+4+3\,{\mu}^{2}+{\mu}^{3}}} \\
    \end{array}
  \right]\, .
\end{equation}
The nonlinear part of $\tilde{p}^1(\cdot)$ is
\[{\rm sign}\left(-{\frac {{\mu}^{2}+\mu+2}{6\,\mu+4+3\,{\mu}^{2}+{\mu}^{3}}}x_1+
                   {\frac {{\mu}^{2}+4+\mu}{6\,\mu+4+3\,{\mu}^{2}+{\mu}^{3}}}x_2\right).\]
Provided $\mathcal{A}=\{x:\;x^TQ\leq a, \text{ with } a \text{ any positive real constant}\}$,
any avoidance control is given by
\begin{equation*}
 \tilde{p}^1(t,x)=-x_1+x_2+{\rm sign}\left(-{\frac {{\mu}^{2}+\mu+2}{6\,\mu+4+3\,{\mu}^{2}+{\mu}^{3}}}x_1+
                           {\frac {{\mu}^{2}+4+\mu}{6\,\mu+4+3\,{\mu}^{2}+{\mu}^{3}}}x_2\right)
\end{equation*}
for all
\[(t,x)\notin\left\{(t,x):\;-{\frac {{\mu}^{2}+\mu+2}{6\,\mu+4+3\,{\mu}^{2}+{\mu}^{3}}}x_1+
                        {\frac {{\mu}^{2}+4+\mu}{6\,\mu+4+3\,{\mu}^{2}+{\mu}^{3}}}x_2=0\;\text{and}\;t\in\T\right\}\, ,\]
\textrm{i.e.}, for all
\[(t,x)\notin\{(t,x):\;({\mu}^{2}+\mu+2)x_1=({\mu}^{2}+4+\mu)x_2\;\text{and}\;t\in\T\}.\]
Because the constrain set $U_1$ depends on the choice of the safety zone
$\Delta_{\mathcal{A}}$, one may choose $\Delta_\varepsilon=\{x:\;x^TQx\leq a
+\varepsilon\;\text{for}\;\varepsilon>0\}$, so $U_1$ must be such that
$\tilde{p}^1(t,x)\subseteq U_1$ for all $x\in\Delta_{\mathcal{A}}
=\Delta_\varepsilon\setminus\mathcal{A}$ and $t\in\T$.

Note that if $\T=\R$ then matrix $Q$ given by (\ref{Q}) is nothing else as
\[Q=\left[
    \begin{array}{cc}
     \frac{3}{2} & -\frac{1}{2} \\
     -\frac{1}{2} & 1 \\
    \end{array}
  \right]\]
and avoidance control is given by $\tilde{p}^1(t,x)=-x_1+x_2+{\rm sign}(-\frac{1}{2}x_1+x_2)$
for all $(t,x)\notin\{(t,x):\;x_1=2x_2\;\text{and}\;t\in\R\}$. Hence, for $\T=\R$
we have nothing else as the result of the example given in \cite{LS}.

If $\T=\Z$, then matrix
\[Q=\left[
    \begin{array}{cc}
     \frac{6}{7} & -\frac{2}{7} \\
     -\frac{2}{7} & \frac{3}{7} \\
    \end{array}
  \right].\]
Thus, avoidance control is  $\tilde{p}^1(t,x)=-x_1+x_2+{\rm sign}(-\frac{2}{7}x_1
+\frac{3}{7}x_2)$ for all $(t,x)\notin\{(t,x):\;2x_1=3x_2\;\text{and}\;t\in\Z\}$.

Let us consider now the "impulsive" time scale
$\T=\mathbb{P}_{1,2}:=\bigcup_{k=0}^{\infty}[3k,3k+1]$. The graininess function takes the form
\[\mu(t)=\left\{
           \begin{array}{ll}
             0 & \hbox{if}\; t\in \bigcup_{k=0}^{\infty}[3k,3k+1); \\
             2 & \hbox{if}\;t\in \bigcup_{k=0}^{\infty}\{3k+1\}
           \end{array}
         \right.\]
and
\[Q=\left\{
      \begin{array}{ll}
       \left[\begin{array}{cc}
               \frac{3}{2} & -\frac{1}{2} \\
              -\frac{1}{2} & 1 \\
              \end{array}\right]  & \hbox{if}\; t\in \bigcup_{k=0}^{\infty}[3k,3k+1); \\ \\
       \left[\begin{array}{cc}
               \frac{11}{18} & -\frac{2}{9} \\
              -\frac{2}{9} & \frac{5}{18} \\
             \end{array}\right]  & \hbox{if}\;t\in \bigcup_{k=0}^{\infty}\{3k+1\}.
      \end{array}
    \right.\]
Then, avoidance control is given by
\[\tilde{p}^1(t,x)=\left\{
                     \begin{array}{ll}
                       -x_1+x_2+{\rm sign}(-\frac{1}{2}x_1+x_2) & \hbox{if}\; t\in \bigcup_{k=0}^{\infty}[3k,3k+1); \\
                       -x_1+x_2+{\rm sign}(-\frac{2}{9}x_1+\frac{5}{18}x_2) & \hbox{if}\;t\in \bigcup_{k=0}^{\infty}\{3k+1\}
                     \end{array}
                   \right.\]
for all
\[(t,x)\notin\left\{
               \begin{array}{ll}
                 \{(t,x):\,x_1=2x_2\;\text{and}\;t\in \bigcup_{k=0}^{\infty}[3k,3k+1)\};\\
                 \{(t,x):\,4x_1=5x_2\,\text{and}\;t\in \bigcup_{k=0}^{\infty}\{3k+1\}\}.
               \end{array}
             \right.
\]

%-----------------------------------------------

\appendix

\section{Appendix}

\subsection*{Elements of $\Delta$-Measures on Time Scales}

The notions of $\Delta$-measurable set and $\Delta$-measurable function
are studied in \cite{CV,dok}.
Let us consider a set $\mathcal{F}=\{[a,b)_{\T}:\;a,b\in\T,\;a\leq b\}$.
The interval $[a,a)_{\T}$ is understood as the empty set.
Let $m_1:\mathcal{F}\rar [0,\infty)$ be a set of functions that assigns
to each interval $[a; b)_{\T}\in \mathcal{F}$ its length: $m_1([a,b)_{\T})=b-a$.

Using the pair $(\mathcal{F};m_1)$, one can generate an outer measure $m_1^*$
on the family of all subsets of $\T$ as follows. Let $E$ be any subset of $\T$.
If there exists at least one finite or countable system of intervals
$V_j\in\mathcal{F}$, $j\in\N$, such that $E\subset\bigcup_j V_j$, then we put
\[m_1^*(E)=\inf\sum_jm_1(V_j)\]
where the infimum is taken over all coverings of $E$ by a finite or countable
system of intervals $V_j \subset \mathcal{F}$. If there is no such covering of $E$,
then we put $m_1^*(E)=\infty$. A set $A\subset\T$ is said
to be \emph{$\Delta$-measurable} if equality
\[m_1^*(E)=m_1^*(E\cap A)+m_1^*(E\cap(\T\setminus A))\]
holds true for any subset $E$ of $\T$. Defining the family
\[\mathcal{M}(m_1^*)=\{A\subset\T:\;A\;\text{is}\;\Delta-\text{measurable}\},\]
the \emph{Lebesgue $\Delta$-measure}, denoted by $\mu_{\Delta}$,
is the restriction of $m_1^*$ to $\mathcal{M}(m_1^*)$.

\begin{prop}[\cite{dok}]
\label{measure_E}
 If set $E$ is Lebesgue measurable, then set $E\cap\T$ is $\Delta$-measurable.
\end{prop}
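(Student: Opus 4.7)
The plan is to verify Carath\'eodory's criterion for $A := E \cap \T$ with respect to the outer measure $m_1^*$. Since test sets for $\Delta$-measurability lie inside $\T$, this reduces to showing that for each $F \subseteq \T$,
$$m_1^*(F) \geq m_1^*(F \cap E) + m_1^*(F \setminus E),$$
the reverse inequality being countable subadditivity. A standing comparison used throughout is that every $\Delta$-cover $\{[a_j,b_j)_\T\}$ of any $B \subseteq \T$ is simultaneously a Euclidean cover $\{[a_j,b_j)\}$ of equal total length, so $\lambda^*(B) \leq m_1^*(B)$, where $\lambda^*$ denotes ordinary Lebesgue outer measure on $\R$.

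Next I would decompose $\T = R \cup S$, with $S$ the at most countable set of right-scattered points and $R$ the complement. A direct verification shows that each singleton $\{s\}$, $s \in S$, is $\Delta$-measurable: any $\Delta$-interval $[a,b)_\T$ containing $s$ satisfies $b \geq \sigma(s)$ because $b \in \T$ and $(s,\sigma(s)) \cap \T = \emptyset$, so it splits as $[a,s)_\T \cup [s,\sigma(s))_\T \cup [\sigma(s),b)_\T$ with lengths summing to $b - a$, yielding Carath\'eodory additivity at $\{s\}$. Hence every subset of $S$ is $\Delta$-measurable; in particular $E \cap S$ is, and the problem reduces to checking the Carath\'eodory identity on test sets $F \subseteq R$ and to showing that $E \cap R$ is $\Delta$-measurable.

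The heart of the argument is the identity $m_1^*(B) = \lambda^*(B)$ for every $B \subseteq R$. Starting from a Lebesgue cover $\{[c_j,d_j)\}$ of $B$ with $\sum(d_j - c_j) < \lambda^*(B) + \varepsilon$, I would subdivide each $[c_j,d_j)$ so that each resulting piece meets at most one gap of $\T$ and then shift every non-$\T$ endpoint to the nearest $\T$-point; because $B \subseteq R$ meets neither any gap interior nor any right-scattered point, no element of $B$ is lost. Distributing the tolerance $\varepsilon$ geometrically among the countably many gaps of $\T$ bounds the total added length by $\varepsilon$, producing a $\Delta$-cover of $B$ of total length at most $\lambda^*(B) + 2\varepsilon$. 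Combined with the preceding inequality $\lambda^* \leq m_1^*$, this gives $m_1^* = \lambda^*$ on subsets of $R$, so Lebesgue measurability of $E$ yields, for every $F \subseteq R$,
$$m_1^*(F) = \lambda^*(F) = \lambda^*(F \cap E) + \lambda^*(F \setminus E) = m_1^*(F \cap E) + m_1^*(F \setminus E),$$
as required. I expect the main obstacle to be the subdivision-and-shift step: controlling the cumulative error from endpoint corrections forces an explicit enumeration of the gaps of $\T$ and a careful geometric allocation of the slack $\varepsilon$ across them, since a single gap with large graininess straddling several cover endpoints could otherwise spoil the approximation.
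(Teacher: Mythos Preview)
The paper does not prove this proposition at all: it is quoted from Deniz's thesis \cite{dok} and stated without argument, so there is no ``paper's own proof'' to compare against. Your proposal is therefore an independent proof rather than a reconstruction.

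As an independent argument your plan is sound, but the step you flag as the main obstacle is in fact harmless, and the elaborate $\varepsilon$-allocation across gaps is unnecessary. For $B\subseteq R$ (no right-scattered points of $\T$ in $B$), each endpoint $c_j$ or $d_j$ of a Lebesgue cover that falls into a gap $(\alpha,\sigma(\alpha))$ can be pushed \emph{inward}: replace $c_j$ by $\sigma(\alpha)$ and $d_j$ by $\alpha$. No point of $B$ is lost (the only $\T$-point in $[\alpha,\sigma(\alpha))$ is $\alpha$, which is right-scattered and hence not in $R$), and each modified interval is \emph{shorter}, so the $\Delta$-cover you obtain has total length at most that of the original Lebesgue cover. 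No subdivision and no geometric distribution of slack is needed; the identity $m_1^*=\lambda^*$ on subsets of $R$ follows directly.

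One phrasing point: the reduction ``to test sets $F\subseteq R$'' needs a line of justification. Since $S$ (hence $R=\T\setminus S$) is $\Delta$-measurable, for arbitrary $F\subseteq\T$ one has $m_1^*(F)=m_1^*(F\cap R)+m_1^*(F\cap S)$; applying your Carath\'eodory identity to the test set $F\cap R\subseteq R$ and then reassembling with subadditivity gives the full Carath\'eodory inequality for $E\cap R$ against $F$. With that made explicit, the argument is complete.
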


Let $\tilde{\R}:=[-\infty,+\infty]$. We say that function $f:\T\rar\tilde{\R}$
is $\Delta$-measurable if for every $\alpha\in\R$
the set $f^{-1}([-\infty,\alpha))=\{t\in\T:\;f(t)<\alpha\}$ is $\Delta$-measurable.

\begin{prop}[\cite{dok}]
\label{measure_rd-con}
If f is rd-continuous, then f is $\Delta$-measurable.
\end{prop}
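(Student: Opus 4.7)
The plan is to exhibit, for each $\alpha \in \R$, the sublevel set $E := f^{-1}([-\infty,\alpha)) = \{t \in \T : f(t) < \alpha\}$ as a union of two pieces, one coming from the subspace topology (handled by Proposition~\ref{measure_E}) and one consisting only of right-scattered points (handled by countability).

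First I would introduce the set $U$ of points $t \in \T$ admitting a $\T$-neighborhood on which $f < \alpha$; note that $U \subseteq E$ by construction. The set $U$ is open in the subspace topology of $\T$, so it has the form $G \cap \T$ for some open subset $G$ of $\R$. Since $G$ is Lebesgue measurable, Proposition~\ref{measure_E} yields that $U$ is $\Delta$-measurable.

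Next I would show that $E \setminus U$ is contained in the set $R$ of right-scattered points of $\T$. Indeed, if $t \in E$ is right-dense, then $f$ is continuous at $t$ (since $\T$ inherits its topology from $\R$ and the rd-continuity hypothesis imposes continuity at right-dense points); applying the definition with $\varepsilon = \alpha - f(t) > 0$ produces a two-sided neighborhood in $\T$ on which $f < \alpha$, so $t \in U$. Hence every point of $E \setminus U$ must be right-scattered. Because the gaps $(t,\sigma(t))$ for right-scattered $t$ form a disjoint family of open intervals in $\R$, the set $R$ is at most countable, and so is $E \setminus U$. Each singleton $\{t\} \subset \T$ is closed in $\R$, hence Lebesgue measurable, hence $\Delta$-measurable by Proposition~\ref{measure_E}; countable unions of $\Delta$-measurable sets are $\Delta$-measurable, so $E \setminus U$ is $\Delta$-measurable.

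Combining the two parts, $E = U \cup (E \setminus U)$ is $\Delta$-measurable for every $\alpha$, which is exactly the definition of $f$ being $\Delta$-measurable. The only point that needs care is verifying that continuity at a right-dense $t$ really gives a two-sided $\T$-neighborhood on which the strict inequality $f < \alpha$ persists; this is the crucial use of rd-continuity and is what isolates the bad set inside the countable collection $R$. The remaining ingredients (that subspace-open sets and singletons are $\Delta$-measurable) are immediate consequences of Proposition~\ref{measure_E}.
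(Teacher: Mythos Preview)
The paper does not prove this proposition; it is quoted from \cite{dok} and stated without argument, so there is no in-paper proof to compare against. Your argument is correct and self-contained: splitting the sublevel set $E=f^{-1}([-\infty,\alpha))$ into its $\T$-interior $U$ (which is $G\cap\T$ for some open $G\subset\R$ and hence $\Delta$-measurable by Proposition~\ref{measure_E}) and a residual set contained in the right-scattered points, which are at most countable because the gaps $(t,\sigma(t))$ are pairwise disjoint nonempty open real intervals, is exactly the right decomposition. One cosmetic remark: at a point that is right-dense but left-scattered the $\T$-neighborhood furnished by continuity may in fact be one-sided, not literally ``two-sided'' as you write; this does not affect the argument, since all you need is \emph{some} $\T$-open neighborhood on which $f<\alpha$, and continuity at right-dense points delivers that.
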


Proposition~\ref{measure_rd-con} and properties of rd-continuous and continuous functions on time scales
implies that if $f$ is a continuous function defined on $\T$, then $f$ is $\Delta$-measurable.
Moreover, if an rd-continuous function $f$ is defined on a $\Delta$-measurable set $E\subseteq\T$,
then $f$ is a $\Delta$-measurable function.

\begin{prop}[\cite{dok}]
Let $f$ be defined on a $\Delta$-measurable subset $E$ of $\T$. Function $f$
is $\Delta$-measurable if the set of all right-dense points
of $E$, where $f$ is discontinuous, is a set of $\Delta$-measure zero.
\end{prop}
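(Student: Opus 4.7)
The plan is to verify $\Delta$-measurability of $f$ by decomposing $E$ according to the type of its points and then handling each piece separately. First I would partition $E = E_{rs} \sqcup E_{rd}$, where $E_{rs}$ is the set of right-scattered points of $E$ and $E_{rd}$ is the set of right-dense points. The structural key is that $E_{rs}$ is at most countable: each right-scattered $t$ produces a nondegenerate open interval $(t,\sigma(t))$ disjoint from $\T$, distinct right-scattered points yield pairwise disjoint such intervals, and $\R$ admits only countably many disjoint open intervals. Moreover, for every right-scattered $t$ the singleton $\{t\}$ equals $[t,\sigma(t))_{\T} \in \mathcal{F}$, hence belongs to $\mathcal{M}(m_1^*)$. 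Consequently, for every $\alpha\in\R$, the set $\{t\in E_{rs}:f(t)<\alpha\}$ is a countable union of $\Delta$-measurable singletons and is $\Delta$-measurable irrespective of the values of $f$ on $E_{rs}$.

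For the right-dense piece, let $D\subset E_{rd}$ be the $\Delta$-null set of right-dense points of $E$ at which $f$ is discontinuous, as furnished by the hypothesis. Then $D$ and $E_{rd}\setminus D$ are both $\Delta$-measurable (the former as a subset of a $\Delta$-null set, using completeness of the Carathéodory extension $\mu_\Delta$), so it suffices to decompose
\[
\{t\in E_{rd}:f(t)<\alpha\} = \bigl(\{t\in E_{rd}\setminus D:f(t)<\alpha\}\bigr) \cup \bigl(\{t\in D:f(t)<\alpha\}\bigr)
\]
and check each summand. The second summand is contained in the $\Delta$-null set $D$, hence is $\Delta$-measurable. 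For the first, the restriction of $f$ to $E_{rd}\setminus D$ is continuous at every point in the subspace topology inherited from $\R$; therefore there exists an open set $U\subseteq\R$ with $\{t\in E_{rd}\setminus D:f(t)<\alpha\} = U\cap(E_{rd}\setminus D)$. Since $U$ is Lebesgue measurable, Proposition~\ref{measure_E} yields $U\cap\T\in\mathcal{M}(m_1^*)$, and intersecting with the $\Delta$-measurable set $E_{rd}\setminus D$ preserves $\Delta$-measurability. Taking the union of the contributions from $E_{rs}$ and $E_{rd}$ produces the $\Delta$-measurability of $f^{-1}([-\infty,\alpha))$ for every $\alpha\in\R$, which is the definition.

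The main obstacle I anticipate is bookkeeping rather than any deep estimate: one has to justify countability of $E_{rs}$, verify that each of $E_{rs}$, $E_{rd}$, $D$, and $E_{rd}\setminus D$ lies in $\mathcal{M}(m_1^*)$, and invoke completeness of $\mu_\Delta$ to absorb the subset of $D$ where $f<\alpha$. The only nontrivial input from the measure-theoretic machinery is Proposition~\ref{measure_E}, which transports Euclidean openness of $U$ into $\Delta$-measurability of $U\cap\T$; everything else is manipulation of the decomposition of $\T$ into right-scattered and right-dense points.
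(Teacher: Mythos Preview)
The paper does not supply its own proof of this proposition: it is quoted from reference~\cite{dok} and stated without argument in the Appendix. Consequently there is no in-paper proof to compare your proposal against.

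Evaluated on its own merits, your argument is correct. The decomposition of $E$ into its right-scattered and right-dense points, the countability of the right-scattered part (via the pairwise disjoint gaps $(t,\sigma(t))$), the use of completeness of $\mu_\Delta$ to absorb subsets of the null set $D$, and the passage from continuity of $f|_{E_{rd}\setminus D}$ to a relatively open sublevel set $U\cap(E_{rd}\setminus D)$ all go through as you describe. One cosmetic point: under the paper's conventions the point $\max\T$, if it exists, is neither right-scattered nor right-dense, so strictly speaking it falls outside your partition $E=E_{rs}\sqcup E_{rd}$; but since a singleton is $\Delta$-measurable this does not affect the conclusion.
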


Let us define the function
\begin{equation}\label{ext}
 \tilde{f}(t):=\left\{
                  \begin{array}{ll}
                    f(t) & \hbox{if}\;t\in\T \\
                    f(t_i) & \hbox{if}\;t\in(t_i,\sigma(t_i))
                  \end{array}
                \right.
\end{equation}
for some $i \in I \subseteq\N$ and $\{t_i\}_{i\in I}=\{t\in\T:\;t<\sigma(t)\}$.
Then, $f$ is $\Delta$-measurable if and only if $\tilde{f}$ is Lebesgue measurable.

\begin{prop}[\cite{CV}]
\label{extension_of_measure}
Assume that $f:\T\rar\tilde{\R}$ and $\tilde{f}:[a,b]_{\R}\rar\tilde{\R}$
is the extension of $f$ to $[a,b]_{\R}$, defined by (\ref{ext}).
Then $f$ is $\Delta$-measurable if and only if $\tilde{f}$ is Lebesgue measurable.
\end{prop}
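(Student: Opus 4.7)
The plan is to reduce the function-level statement to an analogous set-level statement, namely that a subset $A\subseteq\T$ is $\Delta$-measurable if and only if its ``filled-in'' extension
\[\tilde{A}:=A\cup\bigcup_{i\in I_A}(t_i,\sigma(t_i))\subseteq[a,b]_{\R},\]
where $I_A=\{i:t_i\in A \text{ with } t_i<\sigma(t_i)\}$, is Lebesgue measurable. Once this is in hand, the proposition will follow from the fact that preimages of $\tilde{f}$ extend preimages of $f$ in precisely this sense.

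First I would establish the set-level correspondence. The premeasure $m_1([a,b)_{\T})=b-a$ equals the Lebesgue length of the associated real interval, and the extension procedure fills each gap $(t_i,\sigma(t_i))$ of length $\mu(t_i)$. Hence, for any $\Delta$-covering $E\subset\bigcup_j V_j$ with $V_j=[a_j,b_j)_{\T}\in\mathcal{F}$, the collection $\{[a_j,b_j)_{\R}\}$ is a Lebesgue covering of $\tilde{E}$ with the same total length, and conversely every Lebesgue covering of $\tilde{E}$ can be refined into one made of intervals of the form $[a_j,b_j)_{\R}$ whose time-scale counterparts cover $E$. This yields $m_1^*(E)=\lambda^*(\tilde{E})$ for every $E\subseteq\T$. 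From this outer-measure identity the Carath\'eodory condition transfers: $A$ splits every test set $E$ correctly in the $\Delta$-sense if and only if $\tilde{A}$ splits every test set $\tilde{E}$ correctly in the Lebesgue sense, so $A\in\mathcal{M}(m_1^*)$ iff $\tilde{A}$ is Lebesgue measurable in $[a,b]_{\R}$.

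Next I would exploit the specific form of the extension (\ref{ext}). Fix $\alpha\in\R$ and observe that for $t\in\T$, $\tilde{f}(t)<\alpha$ iff $f(t)<\alpha$, while for $t\in(t_i,\sigma(t_i))$, $\tilde{f}(t)<\alpha$ iff $f(t_i)<\alpha$. Setting $A_\alpha:=f^{-1}([-\infty,\alpha))\subseteq\T$, this gives the exact identity
\[\tilde{f}^{-1}([-\infty,\alpha))=A_\alpha\cup\bigcup_{i:\, t_i\in A_\alpha,\ t_i<\sigma(t_i)}(t_i,\sigma(t_i))=\widetilde{A_\alpha},\]
i.e., the preimage under $\tilde{f}$ is the set-extension of the preimage under $f$. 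Applying the set-level equivalence from the previous step to $A_\alpha$, we obtain that $A_\alpha$ is $\Delta$-measurable iff $\widetilde{A_\alpha}=\tilde{f}^{-1}([-\infty,\alpha))$ is Lebesgue measurable. Quantifying over all $\alpha\in\R$ and invoking the defining criteria of $\Delta$-measurability and Lebesgue measurability concludes the proof.

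The main technical obstacle is the outer-measure identity $m_1^*(E)=\lambda^*(\tilde{E})$. Verifying the inequality $\lambda^*(\tilde{E})\leq m_1^*(E)$ is straightforward by translating time-scale coverings into real coverings, but the reverse direction requires showing that any efficient Lebesgue cover of $\tilde{E}$ by real half-open intervals can be trimmed and intersected with $\T$ to yield a comparable $\Delta$-cover of $E$, without losing mass across the filled gaps $(t_i,\sigma(t_i))$; this uses essentially the fact that the gaps themselves are already accounted for by the $\Delta$-measure of the singletons $\{t_i\}$ via $m_1([t_i,\sigma(t_i))_{\T})=\mu(t_i)$.
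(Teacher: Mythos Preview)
The paper does not give its own proof of this proposition: it is quoted from \cite{CV} (Cabada--Vivero) and stated without argument, so there is nothing in the paper to compare your attempt against line by line.

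That said, your strategy is essentially the one used in \cite{CV}: establish the outer-measure identity $m_1^*(E)=\lambda^*(\tilde{E})$ for $E\subseteq\T$, then push the Carath\'eodory criterion through the extension map, and finally observe $\tilde{f}^{-1}([-\infty,\alpha))=\widetilde{f^{-1}([-\infty,\alpha))}$. One point deserves more care than you give it. In the Carath\'eodory transfer you only check that $\tilde{A}$ splits test sets of the special form $\tilde{E}$, whereas Lebesgue measurability requires splitting \emph{every} $F\subseteq[a,b]_{\R}$. The fix is short but should be stated: decompose $F=(F\cap\T)\cup(F\setminus\T)$; the second piece lies in the countable union of open gaps $\bigcup_i(t_i,\sigma(t_i))$, each of which is entirely inside $\tilde{A}$ or entirely inside its complement, so $\tilde{A}$ splits $F\setminus\T$ automatically. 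For $F\cap\T$ one then invokes the outer-measure identity together with $\widetilde{F\cap\T}\cap\tilde{A}=\widetilde{(F\cap\T)\cap A}$ (and similarly for complements) to reduce to the $\Delta$-Carath\'eodory condition for $A$. With that patch your argument is complete and matches the approach of the cited reference.
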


Let us recall the classical {\L}uzin's theorem.
\begin{theorem}[\cite{KF}]
\label{Luzin}
Function $f:[a,b]_{\R}\rar\R$ is Lebesgue measurable if and only if,
given $\varepsilon>0$, there is a continuous function $\varphi:[a,b]_{\R}\rar\R$
such that the Lebesgue measure of the set $\{x:f(x)\neq \varphi(x)\}$
is strictly less than $\varepsilon$.
\end{theorem}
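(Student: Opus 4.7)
The plan is to prove both implications separately. For the forward direction, I would reduce first to the case of a simple measurable function $f=\sum_{i=1}^{N}c_i\chi_{A_i}$ with pairwise disjoint measurable sets $A_i\subseteq[a,b]$. By inner regularity of Lebesgue measure, choose closed $F_i\subseteq A_i$ with $m(A_i\setminus F_i)<\varepsilon/N$. The $F_i$ are then disjoint compact subsets of $[a,b]$, so the function equal to $c_i$ on each $F_i$ is continuous on the closed set $K:=\bigcup_i F_i$; Tietze's extension theorem produces a continuous $\varphi:[a,b]\to\R$ agreeing with this function on $K$, and
\[
\{f\neq\varphi\}\subseteq\bigcup_{i=1}^{N}(A_i\setminus F_i)
\]
has measure below $\varepsilon$.

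For a general measurable $f$, I would first truncate: the sets $\{|f|>n\}$ have measure tending to zero because $[a,b]$ has finite measure and $f$ is finite almost everywhere, so on the complement of such a set (of measure $<\varepsilon/3$) the function is bounded. Next, I would uniformly approximate the bounded measurable function by a simple measurable one: Egorov's theorem promotes the standard pointwise dyadic approximation to uniform approximation off another set of measure $<\varepsilon/3$. Applying the simple-function case with tolerance $\varepsilon/3$ to this approximant produces the desired continuous $\varphi$; the three exceptional sets together have measure below $\varepsilon$.

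For the converse, pick continuous functions $\varphi_n$ with $m\{f\neq\varphi_n\}<1/n$, set $B_n:=\{f=\varphi_n\}$, and let $N:=\bigcap_n B_n^c$, so that $m(N)\le 1/n$ for every $n$ and hence $m(N)=0$. For each $c\in\R$ one has
\[
\{f<c\}=(N\cap\{f<c\})\cup\bigcup_{n=1}^{\infty}(B_n\cap\{\varphi_n<c\}),
\]
which is a countable union of measurable sets together with a subset of a null set; completeness of Lebesgue measure makes this sublevel set measurable, so $f$ is measurable.

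The main obstacle is the forward direction: carefully tracking the exceptional sets through the three stages (truncation, passage to a simple approximant via Egorov, and Tietze extension of the approximant) so their measures sum to less than $\varepsilon$. A secondary subtlety is verifying that the piecewise-constant function built on disjoint closed pieces is genuinely continuous on their union before invoking Tietze — this reduces to the elementary fact that a function whose restriction to each of finitely many disjoint closed sets is continuous is continuous on the union.
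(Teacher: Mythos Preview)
The paper does not supply a proof of this statement: Theorem~\ref{Luzin} is quoted from \cite{KF} and invoked only as a black box to obtain the time-scale version in Corollary~\ref{Luzin_on_TS}. There is therefore no argument in the paper to compare against, and I assess your proposal on its own.

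Your converse and your treatment of simple functions are fine. The forward direction for general $f$, however, has a genuine gap. After truncation and Egorov you produce a \emph{single} simple function $s$ with $|s-f|<\delta$ on a large set $E$, and then a continuous $\varphi$ with $\varphi=s$ off a further small set. On the common good set this yields only $|\varphi-f|<\delta$, not $\varphi=f$; no allocation of the three $\varepsilon/3$ tolerances converts uniform proximity into exact agreement, so the measure of $\{f\neq\varphi\}$ is not controlled at all. The standard repair is to apply the simple-function step to \emph{every} term of an approximating sequence $(s_n)$: for each $n$ obtain a closed $K_n$ with $m([a,b]\setminus K_n)<\varepsilon\,2^{-(n+1)}$ on which $s_n$ is continuous, take a closed $K_0$ from Egorov with $m([a,b]\setminus K_0)<\varepsilon/2$ on which $s_n\to f$ uniformly, and set $K=K_0\cap\bigcap_{n\ge 1}K_n$. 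On $K$ the function $f$ is a uniform limit of continuous functions and hence continuous; Tietze then furnishes a continuous $\varphi$ with $\varphi=f$ on $K$ and $m([a,b]\setminus K)<\varepsilon$.
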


As an immediate consequence of Proposition~\ref{extension_of_measure} and {\L}uzin's Theorem~\ref{Luzin},
we obtain an extension of {\L}uzin's theorem to time scales.
\begin{cor}
\label{Luzin_on_TS}
Assume that $f:\T\rar\tilde{\R}$.
Then $f$ is $\Delta$-measurable if and only if, given $\varepsilon>0$,
there is a rd-continuous function $\varphi:[a,b]_{\T}\rar\R$ such that the
$\Delta$-Lebesgue measure of the set $\{x:f(x)\neq \varphi(x)\}$
is strictly less than $\varepsilon$.
\end{cor}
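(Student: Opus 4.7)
The plan is to transport the statement from $\T$ to the real interval $[a,b]_{\R}$ via the extension (\ref{ext}): by Proposition~\ref{extension_of_measure}, $f$ is $\Delta$-measurable if and only if $\tilde{f}$ is Lebesgue measurable, so the classical Luzin Theorem~\ref{Luzin} applies to $\tilde{f}$. The auxiliary identity I would rely on is that for any $\Delta$-measurable $A \subseteq \T$,
\[
\mu_{\Delta}(A) = \lambda\Bigl(A \cup \bigcup_{t_i \in A,\ \sigma(t_i) > t_i}(t_i,\sigma(t_i))\Bigr),
\]
which follows directly from the covering definition of $\mu_{\Delta}$ by half-open intervals $[a,b)_{\T}$ of length $b-a$.

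For the ($\Rightarrow$) direction, assume $f$ is $\Delta$-measurable; then $\tilde{f}$ is Lebesgue measurable, and Theorem~\ref{Luzin} supplies, for the given $\varepsilon>0$, a continuous $\psi:[a,b]_{\R}\to\R$ with $\lambda\{\tilde{f}\neq\psi\}<\varepsilon$. I set $\varphi:=\psi|_{[a,b]_{\T}}$; being the restriction of a continuous function, $\varphi$ is rd-continuous. The structural observation I would use is that on each jump interval $(t_i,\sigma(t_i))$ both $\tilde{f}$ and the extension $\tilde{\varphi}$ are constant (equal to $f(t_i)$ and $\varphi(t_i)$, respectively), so the set $\{\tilde{f}\neq\tilde{\varphi}\}$ is precisely the jump-filled extension of $B:=\{t\in[a,b]_{\T}:f(t)\neq\varphi(t)\}$. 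By the auxiliary identity, $\mu_{\Delta}(B)=\lambda\{\tilde{f}\neq\tilde{\varphi}\}$, which reduces the measure estimate to a Lebesgue bound.

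For the ($\Leftarrow$) direction, given a sequence of rd-continuous $\varphi_n$ with $\mu_{\Delta}\{f\neq\varphi_n\}<2^{-n}$, Proposition~\ref{measure_rd-con} makes each $\varphi_n$ a $\Delta$-measurable function. A Borel--Cantelli argument applied to $\mu_{\Delta}$ then gives $\varphi_n\to f$ $\Delta$-a.e., so $f$ is the $\Delta$-a.e.\ limit of $\Delta$-measurable functions, and hence itself $\Delta$-measurable.

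The main obstacle lies in the forward direction: Luzin delivers a bound on $\lambda\{\tilde{f}\neq\psi\}$, but what is needed is a bound on $\lambda\{\tilde{f}\neq\tilde{\varphi}\}$, and these sets differ on the jump intervals (where $\psi$ is merely continuous while $\tilde{\varphi}$ is piecewise constant at the value $\psi(t_i)$). To close the gap I would invoke Luzin in its closed-set form -- a closed $F\subseteq[a,b]_{\R}$ with $\tilde{f}|_F$ continuous and $\lambda(F^{c})<\varepsilon$ -- and then saturate $F$ with respect to the jump intervals (so that each $(t_i,\sigma(t_i))$ is either entirely in $F$ or entirely in $F^c$), using the already-piecewise-constant structure of $\tilde{f}$ on these intervals to keep the enlargement of $F^c$ controlled; a Tietze-type extension of $\tilde{f}|_F$, restricted to $\T$, then produces the desired $\varphi$.
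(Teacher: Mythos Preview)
Your approach is exactly the route the paper intends: the corollary is stated there as an ``immediate consequence of Proposition~\ref{extension_of_measure} and \L uzin's Theorem~\ref{Luzin}'' with no further argument, so the two-ingredient reduction you describe \emph{is} the paper's proof. Your write-up in fact goes well beyond the paper, and the technical obstacle you flag in the forward direction---that restricting the Luzin approximant $\psi$ to $\T$ and then re-extending by~(\ref{ext}) does not reproduce $\psi$ on the jump intervals, so $\lambda\{\tilde f\neq\psi\}<\varepsilon$ does not directly yield $\mu_\Delta\{f\neq\varphi\}<\varepsilon$---is a genuine gap that the paper simply does not address. Your proposed remedy via the closed-set form of Luzin, saturation with respect to jump intervals, and Tietze extension is sound; an equivalent and slightly more concrete patch is to note that $\sum_i\mu(t_i)\le b-a$, pick finitely many right-scattered points exhausting all but $\varepsilon/3$ of this total gap length, and modify $\psi$ continuously near those finitely many $t_i$ so that $\psi(t_i)=f(t_i)$ there. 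Your converse via Borel--Cantelli is correct and standard.
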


\begin{remark}\label{rem_appendix}
Function $f:[a,b]_{\T}\rar\R^n$
is $\Delta$-measurable if and only if there exists a sequence of disjoint compact subsets
$\mathcal{J}_k\subset[a,b]_{\T}$ with $\mu_{\Delta}\{[a,b]\setminus\bigcup_{k=1}^{\infty}\mathcal{J}_k\}=0$
such that $f_{\mid_{\mathcal{J}_k}}$ is rd-continuous.
\end{remark}

%-----------------------------------------------

\subsection*{Nonlinear $\Delta$-Differential Equations on Time Scales}

Let us recall \cite{Bh} that function $f:\T \times \R^n \rar \R^n$
is called
\begin{enumerate}
 \item \emph{rd-continuous}, if  $g$ defined by $g(t)=f(t,x(t))$ is
  rd-continuous for any continuous function $x:\T \rar \R^n$;
 \item \emph{bounded} on a set $S \subseteq \T \times \R^n$, if there
  exist constants $m$ and $M$ such that $m \leq f(t,x) \leq M$ for all
  $(t,x) \in S$.
\end{enumerate}

\begin{theorem}[\cite{Bh}]
\label{Cauchy}
 Let $t_0 \in \T$, $x_0 \in \R^n$, $a>0$ with $\inf\T \leq t_0-a$, and $\sup \T \geq t_0+a$.
 Put $\mathcal{I}_a=(t_0-a,t_0+a)_{\T}$ and $\mathcal{V}_b=\{x\in\R^n:|x-x_0|<b\}$.
 Let $f:\T\times\R^n\rar\R^n$ be rd-continuous and regressive. Suppose that for each $(t,x)\in\T\times\R^n$
 there exists a neighborhood $\mathcal{I}_a\times\mathcal{V}_b$ such that $f$ is bounded
 on $\mathcal{I}_a\times\mathcal{V}_b$ and such that the Lipschitz condition
 \[|f(t,x_1)-f(t,x_2)|\leq L(t,x)|x_1-x_2|\,\,\,
 \text{for\,all}\,\,\,(t,x_1),\,(t,x_2)\in\mathcal{I}_a\times\mathcal{V}_b\]
 holds, where $L(t,x)>0$. Then, the initial value problem
 \begin{equation} \label{IVP}
  x^\Delta=f(t,x),\;\;\;x(t_0)=x_0
 \end{equation}
 has exactly one maximal solution $\tilde{\lambda}:\mathcal{I}_{max}\rar\R^n$,
 and the maximal interval of existence $\mathcal{I}_{max}=\mathcal{I}_{max}(t_0,x_0)$ is open.
\end{theorem}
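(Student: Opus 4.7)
The plan is to adapt the classical Picard--Lindel\"of argument to the time scale setting, combining a contraction-mapping local existence theorem with Zorn's lemma to obtain the maximal solution. I proceed in four steps.

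First, I would recast (\ref{IVP}) as a fixed-point equation. Since $f$ is rd-continuous and any candidate solution $x$ is continuous, the composition $s\mapsto f(s,x(s))$ is rd-continuous and therefore admits a Cauchy integral. An absolutely rd-continuous $x$ satisfies (\ref{IVP}) on $[t_0,t_0+h]_{\T}$ if and only if
\[
x(t) \;=\; x_0 + \int_{t_0}^{t} f(s,x(s))\,\Delta s
\qquad\text{for all } t\in[t_0,t_0+h]_{\T},
\]
so it suffices to produce a fixed point of the operator $T$ sending $x$ to the right-hand side.

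Second, I would run a contraction argument. Shrinking $a$ and $b$ if necessary, I may assume $|f|\le M$ and $L(t,x)\le L_0$ on $\overline{\mathcal I_a}\times\overline{\mathcal V_b}$, then pick $h\in(0,a]$ with $Mh\le b$ and $L_0 h<1$. On the complete metric space $X$ of continuous maps $[t_0-h,t_0+h]_{\T}\to\overline{\mathcal V_b}$ equipped with the sup-norm, $T$ maps $X$ into itself (the bound $|Tx(t)-x_0|\le Mh\le b$ is immediate, and $Tx$ is continuous because Cauchy integrals of rd-continuous functions are continuous) and is a contraction with factor $L_0 h$ (by the Lipschitz hypothesis and monotonicity of the $\Delta$-integral). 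Banach's fixed-point theorem produces a unique local solution on $[t_0-h,t_0+h]_{\T}$.

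Third, I would glue local solutions into a maximal one. Let $\mathcal S$ be the collection of pairs $(J,x)$ where $J\subseteq\T$ is a time-scale interval containing $t_0$ and $x$ solves (\ref{IVP}) on $J$, partially ordered by extension. Local uniqueness forces any two members of $\mathcal S$ to coincide on the intersection of their domains, so the union of the graphs of a chain is again a solution; Zorn's lemma then yields a maximal element $\tilde\lambda:\mathcal I_{\max}\to\R^n$.

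Finally, I would show $\mathcal I_{\max}$ is open. Suppose for contradiction that the right endpoint $\tau$ of $\mathcal I_{\max}$ is finite and belongs to $\mathcal I_{\max}$. If $\tau$ is right-scattered, then the formula $\tilde\lambda(\sigma(\tau)):=\tilde\lambda(\tau)+\mu(\tau)f(\tau,\tilde\lambda(\tau))$ delivers a proper extension. If $\tau$ is right-dense, a second application of the local existence step at the initial data $(\tau,\tilde\lambda(\tau))$ followed by pasting does the same. Either case contradicts maximality; the left endpoint is treated symmetrically, with regressivity used to invert the relation $\tilde\lambda(\tau)=y+\mu(\rho(\tau))f(\rho(\tau),y)$ at a left-scattered $\tau$. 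I expect the main obstacle to be this pasting step at a right-dense endpoint: one must verify that the concatenated function remains absolutely rd-continuous across the junction and that the boundedness and Lipschitz data transfer to the enlarged neighbourhood before the second local existence can be legitimately invoked.
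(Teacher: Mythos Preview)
The paper does not contain its own proof of this theorem; it is quoted as a background result from Bohner and Peterson \cite{Bh} in the Appendix, so there is nothing in the paper to compare your argument against. Your Picard--Lindel\"of adaptation is the standard route and essentially what one finds in \cite{Bh}: the integral reformulation, a contraction on a closed ball in the sup norm for local existence and uniqueness, gluing to a maximal solution, and then the dichotomy right-scattered/right-dense (and the symmetric left version, where regressivity is exactly what lets you solve $\tilde\lambda(\tau)=y+\mu(\rho(\tau))f(\rho(\tau),y)$ for $y$) to show the maximal domain cannot contain its endpoints.

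Two small remarks on your write-up. First, Zorn's lemma in Step~3 is superfluous: once you have local uniqueness, any two solutions agree on the overlap of their domains, so the union of all solution graphs is itself the graph of a function, and that function is automatically the maximal solution---no appeal to choice is needed. Second, the ``main obstacle'' you flag at the right-dense junction is not really an obstacle: if $x_1$ solves on $[t_0,\tau]_{\T}$ and $x_2$ solves on $[\tau,\tau+h]_{\T}$ with $x_1(\tau)=x_2(\tau)$, the concatenation satisfies the integral equation on the whole of $[t_0,\tau+h]_{\T}$ by additivity of the Cauchy $\Delta$-integral, and the required local boundedness and Lipschitz data at $(\tau,\tilde\lambda(\tau))$ are supplied directly by the hypothesis, which is assumed to hold at \emph{every} point of $\T\times\R^n$.
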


\begin{theorem}[\cite{Bh}]
\label{ext_Couchy}
Suppose that assumptions of Theorem~\ref{Cauchy} are satisfied,
and assume that there exist positive and continuous functions $p$ and $q$ with
\[|f(t,x)|\leq p(t)|x|+q(t)\,\,\,\text{for\,all}\,\,\,(t,x)\in\T\times\R^n.\]
Then each solution of $x^\Delta=f(t,x)$ exists on all of $\T$.
\end{theorem}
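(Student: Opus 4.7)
The plan is to mimic the classical continuous-time argument: use Theorem~\ref{Cauchy} to obtain a unique maximal solution, then establish an a priori bound via a Gronwall-type estimate on time scales, and finally show that such a bound is incompatible with a maximal interval of existence that is a proper subset of $\T$.

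First, Theorem~\ref{Cauchy} provides a unique maximal solution $\tilde\lambda:\mathcal{I}_{\max}\rar\R^n$ with $\mathcal{I}_{\max}=(\alpha,\beta)_{\T}$ open in $\T$. I argue by contradiction, supposing $\beta<\sup\T$ (the case $\alpha>\inf\T$ is analogous). Pick any $T\in\T$ with $t_0<T<\beta$. Integrating the dynamic equation over $[t_0,t]_{\T}$ for $t\in[t_0,T]_{\T}$ and using the sublinear growth hypothesis gives
\begin{equation*}
|\tilde\lambda(t)|\leq |x_0|+\int_{t_0}^t q(s)\,\Delta s+\int_{t_0}^t p(s)\,|\tilde\lambda(s)|\,\Delta s.
\end{equation*}
Since $p$ and $q$ are continuous on the compact time scale interval $[t_0,T]_{\T}$, they are bounded there, so setting $C_T:=|x_0|+\int_{t_0}^T q(s)\,\Delta s$ I obtain the implicit estimate $|\tilde\lambda(t)|\leq C_T+\int_{t_0}^t p(s)\,|\tilde\lambda(s)|\,\Delta s$ on $[t_0,T]_{\T}$.

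Next, I invoke the Gronwall inequality on time scales (a standard result recorded in \cite{Bh}) to deduce $|\tilde\lambda(t)|\leq C_T\, \mathrm{e}_p(t,t_0)$ for every $t\in[t_0,T]_{\T}$. Because $T$ was arbitrary below $\beta$ and $\mathrm{e}_p(\cdot,t_0)$ is rd-continuous, this furnishes a uniform bound for $|\tilde\lambda|$ on the whole of $[t_0,\beta)_{\T}$. Consequently, $f(\cdot,\tilde\lambda(\cdot))$ is also uniformly bounded on this interval by the hypothesis, so $\tilde\lambda^\Delta$ is uniformly bounded.

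The final step is the standard maximal extension argument. If $\beta$ is right-scattered from the left (i.e.\ left-scattered in $\T$), then $\beta\in\T$ and the bound on $\tilde\lambda^\Delta$ lets us define $\tilde\lambda(\beta)$ directly from the jump relation $\tilde\lambda(\beta)=\tilde\lambda(\rho(\beta))+\mu(\rho(\beta))\tilde\lambda^\Delta(\rho(\beta))$. If $\beta$ is left-dense, the uniform bound on $\tilde\lambda^\Delta$ together with the integral representation yields a Cauchy property, hence a limit $x^*:=\lim_{t\rar\beta^-}\tilde\lambda(t)$ exists and is finite. In either case, applying Theorem~\ref{Cauchy} at the new initial datum $(\beta,x^*)$ (or $(\rho(\beta),\tilde\lambda(\rho(\beta)))$ in the scattered case) produces a local solution past $\beta$, which contradicts the maximality of $\mathcal{I}_{\max}$. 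Thus $\beta=\sup\T$, and the same reasoning gives $\alpha=\inf\T$.

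The main obstacle I anticipate is the last step on a general time scale: one needs the limit at the left-dense endpoint $\beta$ to exist in $\R^n$ so that the IVP can be restarted. The uniform bound on $\tilde\lambda^\Delta$ combined with the integral representation should give the required Cauchy estimate, but care is needed because $\beta$ need not lie in $\T$ itself; one must verify that a solution defined on the closed interval up to $\beta$ in $\T$ can be genuinely glued with a forward extension, exploiting the openness of $\mathcal{I}_{\max}$ guaranteed by Theorem~\ref{Cauchy}.
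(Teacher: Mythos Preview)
The paper does not supply its own proof of this statement; Theorem~\ref{ext_Couchy} is quoted from \cite{Bh} in the Appendix and left unproved there. Your argument is the standard one found in that reference: derive an a~priori bound via the time-scales Gronwall inequality and then contradict maximality by extending past the right endpoint. The reasoning is sound. Your closing worry about whether $\beta$ belongs to $\T$ is unnecessary: since $\T$ is closed in $\R$ and $\beta<\sup\T$, the point $\beta$ is either left-scattered (so $\rho(\beta)\in\T$ forces $\beta=\sigma(\rho(\beta))\in\T$) or left-dense (so $\beta$ is a limit of points of $\T$ and hence lies in $\T$ by closedness); in both cases the restart of the IVP at $\beta$ via Theorem~\ref{Cauchy} is legitimate.
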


\begin{prop}\label{existence_lambda}
 For any compact $\mathcal{K}\subseteq\R^n$ there locally exists a continuous function $\gamma$ such that
 \[|f(t,x)|\leq\lambda(t)\,\,\,\text{for\,all}\,\,\,(t,x)\in\mathcal{I}_a\times\mathcal{K}.\]
\end{prop}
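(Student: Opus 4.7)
The plan is to exploit the local boundedness built into the hypotheses of Theorem~\ref{Cauchy} together with the compactness of $\mathcal{K}$. The conclusion will amount to producing a constant bound, which is trivially continuous in $t$; the only real content is a finite-cover argument.

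First, I would fix an arbitrary $t_0 \in \T$, which is the center of the interval $\mathcal{I}_a$ that appears in the statement. For each $x_0 \in \mathcal{K}$, the hypotheses of Theorem~\ref{Cauchy} furnish a product neighborhood $\mathcal{I}_{a(x_0)}(t_0) \times \mathcal{V}_{b(x_0)}(x_0)$ on which $f$ is bounded, say $|f(t,x)| \leq M(x_0)$ there. The family $\{\mathcal{V}_{b(x_0)}(x_0) : x_0 \in \mathcal{K}\}$ is an open cover of $\mathcal{K}$ in $\R^n$, so by the Heine--Borel property I extract a finite subcover indexed by points $x_1, \ldots, x_N \in \mathcal{K}$ with associated bounds $M_1, \ldots, M_N$ and time radii $a_1, \ldots, a_N$.

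Setting $a := \min_{1\leq i\leq N} a_i$ and $M := \max_{1\leq i\leq N} M_i$, I obtain that for every $(t,x) \in \mathcal{I}_a \times \mathcal{K}$ there is some $i$ with $x \in \mathcal{V}_{b_i}(x_i)$ and $t \in \mathcal{I}_{a_i}(t_0)$, so $|f(t,x)| \leq M_i \leq M$. The function $\lambda(t) \equiv M$ is a (constant, hence) continuous function on $\mathcal{I}_a$ with the required bound, which establishes the proposition.

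There is essentially no obstacle: the statement is a compactness corollary of the standing local-boundedness assumption on $f$. The only points that deserve care are the interpretation of ``locally'' (which I take to mean ``on a suitable $\mathcal{I}_a$ about an arbitrary $t_0\in\T$'', matching the notation from Theorem~\ref{Cauchy}) and the observation that although a finer $t$-dependent bound $\lambda(t) := \max_{x\in\mathcal{K}} |f(t,x)|$ would be natural, the proposition does not demand anything more than continuity of $\lambda$, and the constant produced above already suffices.
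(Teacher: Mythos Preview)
Your argument is correct and follows the same compactness/finite-subcover scheme as the paper's proof. The one point of divergence is how the local bound on each patch is produced: you invoke directly the boundedness clause in the hypotheses of Theorem~\ref{Cauchy}, obtaining a constant $M_i$ on $\mathcal{I}_{a_i}\times\mathcal{V}_{b_i}$ and hence a constant $\lambda\equiv M$, whereas the paper writes $|f(t,x)|\le |f(t,x_0)|+|f(t,x)-f(t,x_0)|$ and bounds the second term via the local Lipschitz condition, arriving at a genuinely $t$-dependent $\lambda_{x_0}(t)$ before taking the maximum over the finite cover. Your route is slightly more direct; the paper's has the mild advantage that the resulting $\lambda(t)$ carries explicit $t$-dependence, which is what the remark immediately following the proposition (about integrability of $\lambda$ on the time scale) appears to have in mind.
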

\begin{proof}
 The proof is similar to the one for the real time case (see \cite{S}).
 Let us note that for any $x\in\mathcal{V}_b$ and $t\in\mathcal{I}_a$, $\mathcal{V}_b$ and $\mathcal{I}_a$
 such as in assumptions of Theorem~\ref{Cauchy}, we have
 \[|f(t,x)|\leq|f(t,x_0|+|f(t,x)-f(t,x_0)|\leq p(t)|x|+q(t).\]
 Put $\lambda_{x_0}=p(t)|x|+q(t)$.
 By compactness of the set $\mathcal{K}$ there exist a finite subcover,
 corresponding to sets $\mathcal{V}_{b_1},\ldots,\mathcal{V}_{b_l}$, centered at $x_1,\ldots,x_l$.
Taking $\lambda(t):=\max\{\lambda_{x_1},\ldots,\lambda_{x_l}\}$ we arrive to the intended conclusion.
\end{proof}
Note that function $\lambda$ defined above is integrable on time scales.
\begin{prop}\label{a.8.11}
 Suppose that assumptions of Theorem~\ref{Cauchy} are satisfied, and assume that
 it is known that there is a compact subset $\mathcal{K}\subseteq\R^n$
 such that the maximal solution $\psi$ of (\ref{IVP}) satisfies $\psi(t)\in\mathcal{K}$
 for all $t\in \mathcal{J}_{\T}\subseteq\mathcal{I}_a$ where $\mathcal{J}_{\T}$
 is open relative to $\mathcal{I}_a$. Then,
 \begin{equation*}
  \mathcal{J}_{\T}=[t_0,+\infty)_{\T}\cap\mathcal{I}_a \, .
 \end{equation*}
\end{prop}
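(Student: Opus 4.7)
The plan is a standard ``no finite-time escape'' argument adapted to time scales. Suppose for contradiction that $\mathcal{J}_{\T}$ is a proper subset of $[t_0,+\infty)_{\T}\cap\mathcal{I}_a$. Since $\mathcal{J}_{\T}$ is open relative to $\mathcal{I}_a$ and contains an initial forward piece emanating from $t_0$, there is a finite $\tau^*$ with $[t_0,\tau^*)_{\T}\subseteq\mathcal{J}_{\T}$ and $\tau^*\notin\mathcal{J}_{\T}$, while $\tau^*$ still lies in $\ov{\mathcal{I}_a}$. I will reach a contradiction by extending $\psi$ across $\tau^*$ and violating its maximality.

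The first step is to obtain a uniform size control on $\psi^{\Delta}$. Because $\psi(t)\in\mathcal{K}$ on $\mathcal{J}_{\T}$, Proposition~\ref{existence_lambda} supplies a continuous $\lambda:\mathcal{I}_a\rar[0,\infty)$ with $|f(t,\psi(t))|\leq\lambda(t)$, which is Delta-integrable on every compact subset of $\mathcal{I}_a$. Integrating the equation and using the triangle inequality,
\[|\psi(t)-\psi(s)|\leq\int_s^t\lambda(r)\,\Delta r\qquad\text{for }t_0\leq s\leq t<\tau^*.\]
Absolute rd-continuity of the indefinite Delta-integral of $\lambda$ forces the right-hand side to vanish as $s,t\rar\tau^{*-}$, so $\psi(\cdot)$ satisfies a Cauchy criterion at $\tau^*$ and the limit $x^*:=\lim_{t\rar\tau^{*-}}\psi(t)$ exists; closedness of $\mathcal{K}$ yields $x^*\in\mathcal{K}$.

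The second step is to concatenate. If $\tau^*\in\T$ is left-dense, $\psi$ extends continuously by $\psi(\tau^*):=x^*$; if $\tau^*\in\T$ is left-scattered then $\rho(\tau^*)\in\mathcal{J}_{\T}$ and $\psi(\tau^*):=\psi(\rho(\tau^*))+\mu(\rho(\tau^*))f(\rho(\tau^*),\psi(\rho(\tau^*)))\in\mathcal{K}$ is already forced by the Delta-equation; if $\tau^*\notin\T$ (it lies in a gap $(t_i,\sigma(t_i))$), replace it by $\sigma(t_i)\in\T\cap\mathcal{I}_a$ and use $x^*$ as the initial value there. In every case, Theorem~\ref{Cauchy} applied at $(\tau^*,x^*)$ produces a local solution of $x^{\Delta}=f(t,x)$ on a neighborhood of $\tau^*$. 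Splicing this to $\psi$ yields a solution of (\ref{IVP}) on a strictly larger set than $\mathcal{J}_{\T}$, contradicting the maximality of $\psi$.

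The main obstacle I anticipate is precisely this case distinction at $\tau^*$ dictated by the time-scale topology, together with ensuring that the ``Cauchy at $\tau^*$'' argument is valid for the Delta-integral (which rests on the absolute rd-continuity of $\int_{t_0}^{\cdot}\lambda\,\Delta r$). Once the limit $x^*\in\mathcal{K}$ is in hand, the remainder is a mechanical invocation of the local existence theorem on time scales, parallel to the classical real-time argument of \cite{S}.
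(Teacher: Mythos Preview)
Your argument is the standard no-finite-time-escape proof from \cite{S}, which is exactly what the paper invokes: its entire proof of Proposition~\ref{a.8.11} is the one line ``Proof is the same as the one given for the real time case (see \cite{S}).'' You have simply written out the details the paper omits, including the time-scale case distinction at $\tau^*$; modulo the minor point that $\psi(\tau^*)$ need not land in $\mathcal{K}$ in the left-scattered case (irrelevant, since local existence at $(\tau^*,\psi(\tau^*))$ requires only that the point lie in $\R^n$), the approach matches the paper's intended one.
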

\begin{proof}
Proof is the same as the one given for the real time case (see \cite{S}).
\end{proof}

%-----------------------------------------------

\section*{Acknowledgements}

The authors were supported
by the Centre for Research on Optimization and Control (CEOC)
from the Portuguese Foundation for Science and Technology (FCT),
cofinanced by the European Community fund FEDER/POCI 2010.

%-----------------------------------------------

%-----------------------------------------------

\end{document}